\numberwithin{equation}{section}
\theoremstyle{definition}
\newtheorem{thm}{Theorem}[section]
\theoremstyle{definition}
\newtheorem{lm}[thm]{Lemma}
\theoremstyle{definition}
\newtheorem{cor}[thm]{Corollary}
\theoremstyle{definition}
\theoremstyle{definition}
\newtheorem{df}[thm]{Definition}
\theoremstyle{definition}
\newtheorem*{osc}{\textbf{Open set condition (OSC)}}
\theoremstyle{definition}
\newtheorem*{bdp}{Bounded distortion property (BDP)}
\theoremstyle{remark}
\newtheorem*{prpr}{}
\theoremstyle{remark}
\newtheorem{rem}[thm]{Remark}
\newcommand{\R}{\mathbb{R}}
\newcommand{\Rn}{\mathbb{R}^{n}}
\newcommand{\N}{\mathbb{N}}
\newcommand {\lesssim} {\ {\raise-.5ex\hbox{$\buildrel<\over\sim$}}\ }
\renewcommand{\eta}{\beta}
\begin{document}
\title[\tiny directed porosity on CIFS and weak convergence of singuar integrals]
{Directed Porosity on Conformal Iterated Function systems and Weak
convergence of Singular Integrals }
\author{Vasilis Chousionis}

\thanks{The author is supported by the Finnish Graduate School in
Mathematical Analysis.} \subjclass[2000]{Primary 28A80, 42B20}
\keywords{CIFS, porosity, singular integrals}

\begin{abstract} The aim of the present paper is twofold. We study
directed porosity in connection with conformal iterated function
systems (CIFS) and with singular integrals. We prove that limit sets
of finite CIFS are porous in a stronger sense than already known.
Furthermore we use directed porosity to establish that truncated
singular integral operators, with respect to general Radon measures
$\mu$ and kernels $K$, converge weakly in some dense subspaces of
$L^2(\mu)$ when the support of $\mu$ belongs to a broad family of
sets. This class contains many fractal sets like CIFS's limit sets.
\end{abstract}

\maketitle
\section{Introduction}
A set $E \subset \Rn$ is called porous, or uniformly lower porous,
if there exists a constant $c>0$ so that for each $x \in E$ and $0
<r < d(E)$ there exists $y \in B(x,r)$ satisfying
$$B(y,cr) \subset B(x,r) \setminus E.$$
Here $B(x,r)$ is the closed ball centered at $x$ with radius $r$ and
$d(\cdot)$ denotes diameter. Dimensional properties of porous sets
were studied by Mattila in \cite{M1}. Motivated by his work
different aspects of porosity have been investigated widely in
relation with dimensional estimates and densities. See e.g.
\cite{S1}, \cite{KS1}, \cite{KS2} and \cite{JJKV}. Some other
applications of porosities related with the boundary behavior of
quasiconformal mappings can be found in \cite{KR}, \cite{MVu} and
\cite{V1}.

Questions regarding porosities arise naturally in fractal geometry.
This can be understood heuristically since many familiar self
similar sets in $\Rn$ are constructed by removing pieces out of some
$n$-dimensional set in every step of the iteration process. The
theory of conformal iterated function systems (CIFS), where the
limit set is generated by uniformly contracting conformal maps, was
studied systematically by Mauldin and Urba\'{n}ski in \cite{MU}.
This theory extends previous results and allows one to analyze many
more limit sets than the ones emerging from the usual similitude
iterated function systems. The precise assumptions on CIFS are given
in Section 2.

Over the past several years many authors have studied the dynamic
and geometric properties of such limits sets, porosity being one of
them. See e.g. \cite{MMU}, \cite{VMU}, \cite{U} and \cite{K}. In
\cite{U}, Urba\'{n}ski gave necessary and sufficient conditions for
the limit set of a CIFS on $\R^n$ to be porous. As a consequence if
the CIFS is finite and its limit set has Hausdorff dimension less
than $n$, it is also porous. Furthermore in the aforementioned paper
some interesting applications of porosities in continued fractions
were established.

If one considers typical examples of $(n-1)$-dimensional CIFS's
limit sets, for example very simple self similar sets like the four
corners Cantor set in the plane, intuitively one expects to find
holes spread in many directions. Motivated by this simple
observation we introduce the notion of directed porous sets. For $m
\in \N, 0 <m <n,$ we denote by $G(n,m)$ the set of all
$m$-dimensional planes in $\R^n$ crossing the origin.
\begin{df}
\label{mdf}Suppose $V\in G(n,m)$. A set $E\subset \mathbf{R}^{n}$
will be called \emph{$V$-directed porous at $x \in E$}, if there
exists a constant $c(V)_x>0$, such that for all $r>0$ we can find $y\in
V+x$ satisfying
\begin{equation*}
B(y,c(V)_x r) \subset B(x,r) \setminus E.
\end{equation*}
If $E$ is $V$-directed porous at every $x \in E$, and
$c(V)=\inf\{\sup c(V)_x:x\in E\}>0$, it will be called \emph{$V$-directed
porous}.
\end{df}
Recall that a set $E\subset \mathbb{R}^{n}$ will be called
$m$-rectifiable for $m=1,..,n$, if there exist $m$-dimensional
$C^1$-submanifolds $M_{i}$, $i\in \N$, such that
\begin{equation*}
\mathcal{H}^{m}(E \setminus \bigcup_{i=1}^\infty M_{i})=0.
\end{equation*}
Here $\mathcal{H}^m$ denotes the $m$-dimensional Hausdorff measure.
Sets intersecting $m$-rectifiable sets in a set of zero
$\mathcal{H}^m$ measure are called $m$-purely unrectifiable. More
information about rectifiability and related topics can be found in
\cite{M2}.

In Section 2, we show that limit sets of finite CIFS have very
strong porosity properties, extending Urba\'{n}ski's result in the
following sense.
\begin{thm}
\label{mpthm} Let $E\subset \mathbb{R}^{n},n \geq 2,$ be the limit
set of a given finite CIFS. If $E$ is $m$-purely unrectifiable then
it is $V$-directed porous for all $V\in G(n,m)$.
\end{thm}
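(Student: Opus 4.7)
The plan is to argue by contradiction, combining the asymptotic self-similarity of CIFS limit sets with a blow-up argument to force a flat $m$-disk inside $E$.

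Suppose $E$ fails to be $V$-directed porous for some $V \in G(n,m)$. Unpacking the definition yields sequences $x_k \in E$, $r_k \in (0, d(E))$, and $c_k \searrow 0$ such that every $y \in (V + x_k) \cap B(x_k, (1-c_k) r_k)$ satisfies $d(y,E) \leq c_k r_k$. If $\liminf_k r_k > 0$, pass to a subsequence with $x_k \to x_\infty \in E$ and $r_k \to r_\infty > 0$; closedness of $E$ then gives immediately $(V + x_\infty) \cap B(x_\infty, r_\infty) \subset E$, a flat $m$-disk of positive $\mathcal{H}^m$-measure, contradicting pure unrectifiability. Hence we may and do assume $r_k \to 0$, where we will blow up to unit scale using the conformal structure.

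For each $k$, pick a finite word $\omega_k$ so that $\phi_{\omega_k}(X) \subset B(x_k, r_k)$ and $\diam \phi_{\omega_k}(X) \asymp r_k$; this is standard under OSC and BDP by descending along an address through a point of $E \cap B(x_k, r_k)$ until the cylinder diameter first crosses $r_k$. With $s_k \asymp 1/r_k$ the appropriate normalizing constant, set $\Psi_k(z) := s_k\bigl(\phi_{\omega_k}^{-1}(z) - \phi_{\omega_k}^{-1}(x_k)\bigr)$; by BDP, $\Psi_k$ is a $(1+o(1))$-similarity on $B(x_k, r_k)$, sends $x_k$ to the origin, and stretches distances by a factor of order one. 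Under $\Psi_k$ the sliced plane $(V + x_k) \cap B(x_k, (1-c_k)r_k)$ becomes an $m$-dimensional $C^1$ submanifold $S_k$ of a region of fixed size, and Arzelà–Ascoli (with uniform $C^1$ bounds from BDP) together with compactness of $G(n,m)$ gives, along a subsequence, $S_k \to M$ in $C^1$ for some $m$-dimensional $C^1$ submanifold $M$. The closeness assumption transports under $\Psi_k$ to: every point of $S_k$ lies within $O(c_k)$ of $\Psi_k(E \cap B(x_k, r_k))$.

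The main technical obstacle is to conclude that $M$ is contained, up to a diffeomorphism, inside $E$ itself rather than in a larger limit set. The invariance $\phi_{\omega_k}(E) \subset E$ identifies $\Psi_k(\phi_{\omega_k}(E)) = T_k(E)$ as an exact similarity image of $E$ with $T_k$ of bounded ratio. The remaining contributions to $E \cap B(x_k, r_k)$ come from other cylinders $\phi_\tau(X)$; finiteness of the CIFS with OSC and BDP ensures that only a bounded number of these can contribute a macroscopic set under $\Psi_k$, and each such image is a conformal (BDP-controlled) picture of part of $E$ with uniformly bounded geometry. Passing to a subsequential Hausdorff limit, $\Psi_k(E \cap B(x_k, r_k))$ converges to a finite union of similarity/conformal images of subsets of $E$ that contains $M$. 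By a Baire-category argument, one of these images must contain a relatively open portion of $M$; pulling back by the associated diffeomorphism exhibits an $m$-dimensional $C^1$ submanifold of positive $\mathcal{H}^m$-measure inside $E$, contradicting the pure unrectifiability of $E$ and completing the proof.
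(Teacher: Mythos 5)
Your strategy is sound and reaches the same final contradiction as the paper --- a conformal image of a piece of an $m$-plane sitting inside $E$, which is incompatible with pure $m$-unrectifiability --- but the route is genuinely different in one structural respect. The paper first proves a quantitative single-cylinder lemma (Lemma \ref{confim}): for each word $w\in I(x,r)$ separately it finds a hole centred on $V+x$ avoiding $\varphi_w(E)$ alone, the proof being a blow-up/Arzel\`a--Ascoli/contradiction argument very close in spirit to yours. It then deduces Theorem \ref{mpthm} (via Theorem \ref{cthm}) by iterating this lemma at most $N$ times over the boundedly many cylinders covering $E\cap B(x,r)$ at scale $r$ (property (CIFS 2)), shrinking the hole by a controlled factor at each step. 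You instead treat all $\le N$ cylinders in a single blow-up: the rescaled picture of $E\cap B(x_k,r_k)$ subconverges to a finite union of bi-Lipschitz conformal images $G_i(E)$, the limit manifold $M$ is covered by these finitely many closed sets, and Baire category extracts one $G_i$ containing a relatively open piece of $M$. This is a legitimate replacement for the iteration and arguably more direct for the statement at hand; what the paper's decomposition buys is the stronger intermediate Theorem \ref{cthm} (porosity under the weaker hypothesis (\ref{conf})) and cleaner bookkeeping of where BDP, (CIFS 1) and (CIFS 2) enter. Note that your argument relies on exactly the same structural inputs: the bounded-multiplicity covering of $E\cap B(x,r)$ by cylinders of diameter comparable to $r$, and BDP to make the rescaled cylinder maps uniformly bi-Lipschitz.

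You should, however, repair the blow-up map before this can be called complete. As written, $\Psi_k(z)=s_k\bigl(\phi_{\omega_k}^{-1}(z)-\phi_{\omega_k}^{-1}(x_k)\bigr)$ with $s_k\asymp 1/r_k$ is inconsistent: $\phi_{\omega_k}^{-1}$ already expands by a factor $\asymp 1/r_k$, so this $\Psi_k$ expands by $\asymp 1/r_k^{2}$ and does not send $B(x_k,r_k)$ to unit scale, contradicting your own description of it as an order-one stretch. More seriously, $\phi_{\omega_k}^{-1}$ is defined only on $\phi_{\omega_k}(\Omega)$, which need not contain $B(x_k,r_k)$, so you cannot apply $\Psi_k$ to the plane slice or to the other cylinders; for $n=2$ there is no conformal extension to fall back on. The fix is the one the paper uses: take $\Psi_k$ to be the plain homothety of ratio $\asymp 1/r_k$ centred at $x_k$. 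Then the plane slice rescales to a genuinely flat $m$-disk (so $M$ is flat and no $C^1$-convergence of submanifolds is needed), while each $\Psi_k\circ\varphi_{w}$, $w\in I(x_k,r_k)$, is uniformly bi-Lipschitz and conformal, and Arzel\`a--Ascoli applies. Two smaller points: BDP gives a fixed distortion constant $K$, not a $(1+o(1))$-similarity, and it controls only first-derivative ratios, so the passage to a conformal limit should be argued as in the paper (uniform convergence of uniformly bi-Lipschitz conformal maps, whose limit is conformal with conformal inverse) rather than via ``uniform $C^1$ bounds from BDP''.
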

In \cite{K}, K\"{a}enm\"{a}ki studied the geometric structure of
CIFS's limit sets. He proved that if $E$ is a limit set of a given
CIFS with $\textmd{dim}_{\mathcal{H}}E=t$, where
$\textmd{dim}_{\mathcal{H}}$ stands for Hausdorff dimension, and
$l\in \N,0<l<n,$ then either
\begin{enumerate}
\item $\mathcal{H}^t(E \cap M)=0$ for every $l$-dimensional
$C^1$-submanifold of $\Rn$, or,
\item $E$ lies in some $l$-dimensional affine
subspace or $l$-dimensional geometric sphere when $n>2$, and in some
analytic curve when $n=2$.
\end{enumerate}
Combining the previous rigidity result with Theorem \ref{mpthm} we
derive the following corollary.
\begin{cor}
\label{mcor} Let $E\subset \mathbb{R}^{n},n\geq2,$ be the limit set
of a given finite CIFS. If $\textmd{dim}_{\mathcal{H}}E\leq m$ where
$m \in \N, 0 <m <n,$ then $E$ is $V$-directed porous at every $x \in
E$ for all, except at most one, $V\in G(n,m)$.
\end{cor}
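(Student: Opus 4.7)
The plan is to combine K\"aenm\"aki's rigidity dichotomy, applied at level $l=m$, with Theorem \ref{mpthm}. In the first alternative of the dichotomy one has $\HH^t(E\cap M)=0$ for every $m$-dimensional $C^1$-submanifold $M$; together with $t\leq m$ this forces pure $m$-unrectifiability of $E$, either directly when $t=m$ or via $\HH^m(E)=0$ when $t<m$. Theorem \ref{mpthm} then yields $V$-directed porosity at every $x\in E$ for every $V\in G(n,m)$, so no exception at all is needed.

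In the second alternative $E$ is contained in an $m$-dimensional affine subspace $W$, an $m$-dimensional geometric sphere (when $n>2$), or an analytic curve (when $n=2$). I would use the affine subspace subcase as the model and identify the unique excepted direction there. Let $V_0\in G(n,m)$ be the linear subspace parallel to $W$. For any $V\ne V_0$ the two distinct $m$-planes meet at some angle $\theta>0$, so one can choose a unit vector $v\in V$ with $d(v,V_0)=\sin\theta$. Given $x\in E\subset W$ and $r>0$, setting $y=x+(r/2)v$ puts $y\in V+x$ at distance $(r/2)\sin\theta$ from $W$, whence $B(y,(\sin\theta/2)r)\subset B(x,r)\setminus W\subset B(x,r)\setminus E$. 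Thus $c(V)=\sin\theta/2$ witnesses $V$-directed porosity of $E$ at every $x\in E$ uniformly, with $V_0$ as the sole exception.

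For the sphere and analytic-curve subcases I would apply the same transversality argument pointwise, now with the tangent subspace to the containing manifold at $x$ playing the role of $V_0$. The main obstacle is that this tangent direction varies with $x$, so to end up with only one global exceptional $V$ one must either fall back to pure $m$-unrectifiability (which is automatic as soon as $t<m$, reducing to the first alternative of the dichotomy) or, in the delicate boundary case $t=m$, quantitatively control the quadratic curvature of the sphere or analytic curve so that the porosity constant does not degenerate as $r\to 0$. Verifying this last point is the step I expect to require the most care; once it is settled, collating the three subcases with the first alternative yields the stated corollary.
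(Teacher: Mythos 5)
Your overall strategy---K\"aenm\"aki's dichotomy at level $l=m$ combined with Theorem \ref{mpthm}---is exactly the derivation the paper intends (the paper records no further details), and your treatment of the first alternative and of the affine--subspace subcase is correct. The genuine gap is in the sphere and analytic-curve subcases, and neither of the two escape routes you propose can close it. First, you cannot always fall back to pure $m$-unrectifiability: in the second alternative the limit set can perfectly well have $t=m$ while lying in a sphere or analytic curve. A concrete example is the circular arc $E=\{e^{i\theta}:\theta\in[0,1]\}\subset\R^2$, which is the limit set of the finite CIFS generated by the two holomorphic contractions $z\mapsto z^{1/2}$ and $z\mapsto e^{i/2}z^{1/2}$ on a suitable annular sector; here $m=t=1$. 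Second, no amount of curvature control rescues the tangent direction: at an interior point $x$ of this arc, with $V=T_xE$ the tangent line, every $y\in (V+x)\cap B(x,r)$ lies within $r^2/2$ of the unit circle, whose intersection with $B(x,2r)$ is entirely contained in $E$ for small $r$, so $E$ is \emph{not} $V$-directed porous at $x$. Since the tangent direction sweeps out a whole arc of $G(2,1)$ as $x$ runs over $E$, the version you are trying to prove---a single exceptional $V$ valid simultaneously for every $x\in E$---is actually false.

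What is true, and how the corollary must be read, is the pointwise statement: for each $x\in E$ there is at most one exceptional $V\in G(n,m)$, namely the tangent plane $T_xM$ at $x$ of the containing manifold $M$ (affine plane, geometric sphere, or analytic curve), and this exceptional direction is allowed to vary with $x$. Under this reading your own transversality argument finishes the job: for $V\neq T_xM$ the plane $V+x$ leaves $M$ at a linear rate near $x$, with constant comparable to the angle between $V$ and $T_xM$ up to an error $O_x(r^2)$ coming from the curvature, which is harmless for $r\leq r_0(x)$; the remaining scales $r>r_0(x)$ are absorbed into the constant $c(V)_x$, which Definition \ref{mdf} permits to depend on $x$. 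In the affine subcase the tangent plane is the same at every point, which is why you correctly found a single global exception there; in the curved subcases ``at most one exceptional $V$'' can only be understood per point.
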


The motivation for this paper comes from the theory of singular
integral operators with respect to general measures. Given a Radon
measure $\mu$ on $\Rn$ and a $\mu$-measurable kernel
$K:\mathbb{R}^{n}\setminus \{0\}\rightarrow \mathbb{R}$ that
satisfies the antisymmetry condition
\begin{equation*}K(-x)=-K(x)\text { for
all }x\in\Rn,
\end{equation*}the singular integral operator $T$
associated with $K$ and $\mu$ is formally given by
\begin{equation*}
T^{\mu ,K}(f)(x)=\int K(x-y)f(y)d\mu y.
\end{equation*}
Since the above integral does not usually exist when $x\in
\textmd{spt} \mu$, the truncated singular integral operators $T^{\mu
,K}_{\varepsilon},\varepsilon >0$;
\begin{equation*}
T^{\mu ,K}_{\varepsilon}(f)(x)=\int_{|x-y|>\varepsilon}
K(x-y)f(y)d\mu y,
\end{equation*}
are considered. Often for simplicity we will denote $T^{\mu
,K}_{\varepsilon}$ by $T_{\varepsilon}$. Using this convention one
defines the maximal operator $T^*$,
\begin{equation*}
T^*(f)(x)=\underset{\varepsilon > 0}{\sup }|T_{\varepsilon}(f)(x)|,
\end{equation*}
and the principal values of $T(f)$ at every $x \in \Rn$ which, if
they exist, are given by
\begin{equation*}
\textmd{p.v.}T(f)(x)=\underset{\varepsilon \rightarrow 0}{\lim
}T_{\varepsilon}(f)(x).
\end{equation*}
In the classical setting, when $\mu=\mathcal{L}^n$, the Lebesgue
measure in $\Rn$, and $K$ is a standard Calder\'{o}n-Zygmund kernel,
cancelations and the denseness of smooth functions in $L^1$ force
the principal values to exist almost everywhere for $L^1$-functions.
One could naturally ask if the $L^2(\mu)$-boundedness of $T^*$,
which means that there exists a constant $C>0$ such that for all
$f\in L^2(\mu)$,
\begin{equation*}
\int {T^*(f)}^2 d\mu \leq C\int {|f|}^2d\mu,
\end{equation*}
forces the principal values to exist. The answer to the above
question is not always positive, see e.g. \cite{D3} and \cite{Cs}.
Interestingly enough even when $\mu$ is an $m$-dimensional
Ahlfors-David (AD) regular measure in $\Rn$:
\begin{equation*}
 C^{-1}r^{m}\leq \mu (B(x,r))\leq Cr^{m} \text{ for }x \in
 \textmd{spt}\mu, 0 <r<\textmd{diam}(\textmd{spt}\mu),
\end{equation*}
and $K$ is any of the coordinate Riesz kernels:
\begin{equation*}
R_i^{m} (x)=\frac {x_i}{|x|^{m+1}}\text{ for } i=1,...,n,
\end{equation*}
the question remains open for $m>1$. For $m=1$, it has positive
answer by Tolsa, see \cite{T1}, even for more general measures.
Previous results by Mattila, Melnikov and Verdera, see \cite{MM} and
\cite{MMV}, dealt with the affirmative in the case of AD-regular
measures.

Recently, in \cite{MV}, Mattila and Verdera proved that, for general
measures and kernels, the $L^2(\mu)$-boundedness of $T^*$ implies
that the operators $T_{\varepsilon}$  converge weakly in $L^2(\mu)$.
This means that there exists a bounded linear operator
$T:L^2(\mu)\rightarrow L^2(\mu)$ such that for all $f,g \in
L^2(\mu)$,
\begin{equation}
\label{wcmv}
\lim _{\varepsilon \rightarrow 0} \int T_{\varepsilon
}(f)(x)g(x)d\mu x=\int T(f)(x)g(x)d\mu x.
\end{equation}
Furthermore they showed that
\begin{equation}
\label{avco}T(f)(z)= \lim _{r \rightarrow 0} \frac{1}{\mu(B(z,r)}
\int_{B(z,r)} \int_{\Rn \setminus B(z,r)} K(x-y)f(y)d\mu yd\mu x
\end{equation}
for $\mu$ a.e. $z$. One of the main points in their proof is that
$L^2(\mu)$-boundedness forces the limits
\begin{equation}
\label{wc}
\lim _{\varepsilon \rightarrow 0} \int T_{\varepsilon
}(f)(x)g(x)d\mu
\end{equation} to exist when $f,g$ are finite
linear combinations of characteristic functions of balls. We will
denote this dense subspace of $L^2(\mu)$ by
$\mathcal{X}_{B}(\mathbb{R}^{n})$.

Recall that if $E$ is a $\mathcal{H}^m$-measurable set with
$\mathcal{H}^m(E) <\infty$ and $\mu=\mathcal{H}^m \lfloor E $, the
restriction of $\mathcal{H}^m$ on $E$, by the works of Mattila and
Preiss \cite{MPr}, Mattila and Melnikov \cite{MM}, Verdera \cite{Ve}
and Tolsa \cite{T2}, the principal values
\begin{equation*}
\lim_{\varepsilon \rightarrow 0}\int_{\mathbb{R}^{n}\setminus
B(x,\varepsilon )}R_i^{m} (x-y)d\mu y
\end{equation*}
exist $\mu$ almost everywhere if and only if the set E is
$m$-rectifiable.

With the last two paragraphs in mind one might ask if weak limits
like in (\ref{wc}) might exist if we remove the strong
$L^2$-boundedness assumption even when the measures are supported in
some purely unrectifiable sets. Before stating the main results of
Section 3 we give some basic notation. Let
\begin{equation}
\label{qcu} Q(\mathbb{R}^{n})=\{A(x,r):x\in \mathbb{R}^{n},r>0
\text{ and } A(x,r)=\prod_{i=1}^{n}[x^{i}-r/2,x^{i}+r/2)\}
\end{equation}
and denote by $\mathcal{X}_{Q}(\mathbb{R}^{n})$ the dense subspace
of $L^2(\mu)$, in the same manner as
$\mathcal{X}_{B}(\mathbb{R}^{n})$, while instead of balls we take
cubes from $Q(\mathbb{R}^{n})$.
\begin{thm}
\label{msithm} Let $\mu$ be a finite Radon measure on $\Rn,n\geq2,$
satisfying
\begin{equation}
\label{mg} \mu(B(x,r))\leq Cr^{n-1}\text{ for all }x\in
\textmd{spt}\mu \text{ and }r>0.
\end{equation} Let $K:\mathbb{R}^{n}\setminus
\{0\}\rightarrow \mathbb{R}$ be an antisymmetric kernel, satisfying
for all $x \in \Rn$,
\begin{equation}
\label{kg} \left| K(x)\right|  \leq C_{K}\left| x\right| ^{-(n-1)},
\end{equation}
where $C_{K}$ is a constant depending on the kernel $K$.
\begin{enumerate}
\item If $\textmd{spt}\mu$ is $V^i$-directed porous for $i=1,..,n$, where
$V^{i}=\left\{ x\in \mathbb{R}^{n}:x^{i}=0\right\}$ are the usual
coordinate planes of $\Rn$, the truncated singular integral
operators $T^{\mu ,K}_{\varepsilon}$ converge weakly in $\mathcal{X
}_{Q}(\mathbb{R}^{n})$.
\item If $\textmd{spt}\mu$ is $V$-directed porous for all $V\in G(n,n-1)$, the truncated
singular integral operators $T^{\mu ,K}_{\varepsilon}$ converge
weakly also in $\mathcal{X }_{B}(\mathbb{R}^{n})$.
\end{enumerate}
\end{thm}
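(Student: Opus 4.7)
\medskip
\noindent\textbf{Proof plan for Theorem \ref{msithm}.}
The plan is to reduce both parts to a quantitative mass estimate near a hyperplane, which is where the directed porosity hypothesis enters. By linearity of $T_\varepsilon$, for (i) it suffices to show that $\lim_{\varepsilon\to 0}\int T_\varepsilon(\chi_A)\chi_B\,d\mu$ exists for any two cubes $A,B\in Q(\Rn)$, and similarly for (ii) with balls. Writing $C=A\cap B$, $A'=A\setminus B$, $B'=B\setminus A$ and using the antisymmetry of $K$ together with a symmetrization in $(x,y)$, the bilinear form splits into four pieces: the $C\times C$ piece, which by the standard $x\leftrightarrow y$ trick integrates to zero in the principal-value sense, and three cross pieces of the form
\begin{equation*}
J_\varepsilon(E,F)=\iint_{|x-y|>\varepsilon} K(x-y)\,\chi_E(y)\chi_F(x)\,d\mu(y)\,d\mu(x),
\end{equation*}
where $E$ and $F$ are disjoint and their common boundary portion lies in a finite union of coordinate hyperplanes (case (i)) or spherical caps that can be localized to tangent hyperplanes (case (ii)).

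To prove convergence of $J_\varepsilon(E,F)$, it suffices to show that the annular contribution on $\{\delta<|x-y|\le\varepsilon\}$ tends to $0$ as $\delta,\varepsilon\to 0$. Using the growth bound (\ref{kg}) and a dyadic decomposition in $|x-y|$, this in turn reduces to the following slab estimate: for every hyperplane $V$ separating $E$ from $F$ and every small $\tau>0$,
\begin{equation*}
\mu\bigl(\{z\in B(z_0,r):\operatorname{dist}(z,V)\le\tau\}\bigr)\lesssim \tau^{\alpha}r^{\,n-1-\alpha}
\end{equation*}
for some $\alpha=\alpha(c(V))>0$. I would extract this from $V$-directed porosity as follows: at any point $z\in\operatorname{spt}\mu$ lying in the slab, the porosity hypothesis produces a ball of radius $c(V)r$ inside $B(z,r)$ that is missing from $\operatorname{spt}\mu$ and is centered on the translated hyperplane $V+z$; iterating this at geometrically decreasing scales and combining with a Vitali-type covering of the slab by porosity-balls creates a geometric deficit in the mass, which when matched against the upper bound $\mu(B(z,r))\le Cr^{\,n-1}$ from (\ref{mg}) delivers the desired exponent $\alpha>0$. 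Inserting the slab estimate into the dyadic sum shows the annuli contribute a geometrically summable series that vanishes with $\varepsilon$, yielding the Cauchy property.

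For part (ii) the interfaces between $C$, $A'$, $B'$ are spherical rather than planar, so I would first cover $G(n,n-1)$ by finitely many small caps, approximate each spherical interface locally by its tangent hyperplane $V$, and apply the slab estimate above; the hypothesis that $\operatorname{spt}\mu$ is $V$-directed porous for \emph{every} $V\in G(n,n-1)$ is exactly what lets us perform this reduction uniformly in the normal direction of the interface. The main obstacle, and the step that will require the most care, is the slab mass estimate: upgrading the pointwise, one-hole-per-scale content of directed porosity to an actual exponent of decay for $\mu$ of thin slabs demands a careful covering/iteration argument that tracks how the holes guaranteed by porosity at scales $r, cr, c^2r,\dots$ aggregate near the fixed hyperplane $V$ rather than scatter along the moving translates $V+z$.
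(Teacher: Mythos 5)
Your plan is correct and follows essentially the same route as the paper: the antisymmetry reduction to cross terms separated by the face hyperplanes, a dyadic decomposition in the distance to those hyperplanes, and a power-decay estimate for the $\mu$-mass of thin slabs obtained by iterating directed porosity at geometrically decreasing scales against the growth bound (\ref{mg}) — the paper implements exactly this "geometric deficit" via nested grids of $M^{-k}$-subcubes, of which at least one per cube per scale is porosity-empty, giving $\mu(\text{slab of width }M^{-k})\lesssim (1-M^{1-n})^{k}$. The step you flag as the main obstacle (holes accumulating near the fixed hyperplane rather than the moving translates $V+z$) is resolved in the paper precisely because the covering cubes are centered on the hyperplane itself, so the porosity holes are automatically aligned with it.
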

As an immediate consequence of Theorems \ref{mpthm} and \ref{msithm}
we obtain the following corollary.
\begin{cor}
\label{scor} Let $E \subset \Rn,n \geq 2,$ be a $(n-1)$-purely
unrectifiable limit set of a given finite CIFS. If $\mu
=\mathcal{H}^{n-1}\lfloor E$ and $K:\mathbb{R}^{n}\setminus
\{0\}\rightarrow \mathbb{R}$ is a kernel as in Theorem \ref{msithm},
the limits
\begin{equation*}
\lim _{\varepsilon \rightarrow 0} \int T_{\varepsilon
}(f)(x)g(x)d\mu
\end{equation*}
exist for $f,g \in \mathcal{X }_{Q}(\mathbb{R}^{n})$ and $f,g \in
\mathcal{X }_{B}(\mathbb{R}^{n})$.
\end{cor}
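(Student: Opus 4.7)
\emph{Proof proposal.} The plan is to derive the corollary as an immediate synthesis of Theorems \ref{mpthm} and \ref{msithm}. First, I will invoke Theorem \ref{mpthm} with $m=n-1$: since $E$ is by hypothesis $(n-1)$-purely unrectifiable, this at once yields that $E$ is $V$-directed porous for every $V\in G(n,n-1)$. In particular $\mathrm{spt}\,\mu\subset E$ is $V$-directed porous for all such $V$ and, in the special case of the coordinate hyperplanes $V^i=\{x\in\Rn:x^i=0\}$, is $V^i$-directed porous for $i=1,\dots,n$. Thus the geometric hypotheses of both parts (i) and (ii) of Theorem \ref{msithm} will be satisfied.

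The second step is to verify the analytic hypotheses on $\mu=\mathcal{H}^{n-1}\lfloor E$: that it is a finite Radon measure on $\Rn$ satisfying the upper growth bound (\ref{mg}) at the exponent $n-1$. Since $E$ is the limit set of a finite CIFS it is automatically compact. If $\dim_{\mathcal{H}}E<n-1$, then $\mu\equiv 0$ and the conclusion is vacuous; so I will concentrate on the only remaining case of interest, $\dim_{\mathcal{H}}E=n-1$. Here the Mauldin--Urba\'nski theory for finite CIFS applies: the system is regular at its dimension, $\mathcal{H}^{n-1}\lfloor E$ is Ahlfors--David regular of exponent $n-1$, and consequently $\mu$ is a finite Radon measure with $\mu(B(x,r))\leq Cr^{n-1}$, as required.

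With both sets of hypotheses in place, Theorem \ref{msithm}(i) will deliver weak convergence of $T^{\mu,K}_{\varepsilon}$ on $\mathcal{X}_{Q}(\Rn)$ and Theorem \ref{msithm}(ii) will deliver weak convergence on $\mathcal{X}_{B}(\Rn)$. For any $f,g$ in either of these dense subspaces, weak convergence is exactly the statement that $\lim_{\varepsilon\to 0}\int T_{\varepsilon}(f)g\,d\mu$ exists, which is the content of the corollary. The only step with any real content is the verification of AD-regularity (and hence of the $(n-1)$-growth bound) at the critical dimension; this is a classical but nontrivial input from the theory of conformal systems rather than a new argument, and once it is accepted the remainder of the proof is a mechanical concatenation of the two main theorems of the paper.
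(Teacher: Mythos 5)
Your proposal is correct and follows exactly the route the paper intends: the paper offers no separate proof, stating only that the corollary is ``an immediate consequence of Theorems \ref{mpthm} and \ref{msithm}'', which is precisely your concatenation of the two results with $m=n-1$. Your additional verification that $\mu=\mathcal{H}^{n-1}\lfloor E$ is finite and satisfies the growth bound (\ref{mg}) --- via AD-regularity of finite CIFS limit sets at the critical dimension, with the subcritical case being vacuous --- is a detail the paper leaves implicit, and it is a correct and worthwhile supplement rather than a deviation.
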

We conclude the introductory part with the following two remarks.
\begin{rem} The kernels satisfying the assumptions of
Theorem \ref{msithm} belong to a quite broad class,
$(n-1)$-dimensional Riesz kernels being one representative. Notice
that we do not even require them to be continuous. In \cite{CM}, it
was proved, with different techniques, that weak convergence in
$\mathcal{X }_{Q}(\mathbb{R}^{n})$ and in $\mathcal{X
}_{B}(\mathbb{R}^{n})$ holds for much more general measures if we
restrict the kernels to a smaller but still large and widely used
family.
\end{rem}
\begin{rem} One
cannot hope of replacing the function spaces
$\mathcal{X}_{B}(\mathbb{R}^{n})$ and
$\mathcal{X}_{Q}(\mathbb{R}^{n})$ with $L^2(\mu)$ in Theorem
\ref{msithm}. This follows because as it was remarked in \cite{MV},
by the Banach-Steinhaus Theorem, the weak convergence in $L^2(\mu)$
implies that the operators $ T_{\varepsilon}$ are uniformly bounded
in $L^2(\mu)$ and singular integral operators associated with
$1$-dimensional Riesz kernels and $1$-purely unrectifiable measures
are not bounded in $L^2(\mu)$.
\end{rem}

\section{Directed porosity on Conformal Iterated Function Systems}
We begin by describing the setting of CIFS, as introduced in
\cite{MU}. Let $I$ be a countable set with at least two elements and
let
\begin{equation*}
I^{\ast }=\underset{m\geq 1}{\bigcup }I^{m}\text{ and }I^{\infty
}=I^{\mathbb{N}}
\end{equation*}
If $w=(i_{1},i_{2},..)\in I^{\ast }\cup I^\infty$ and $n \in \N$,
does not exceed $|w|$, the length of $w$, we denote
$w|_{n}=(i_{1},..,i_{n})$.

Choose $\Omega $ to be some open, bounded and connected subset of
$\mathbb{R}^{n}$ and let $\{\varphi _{i}\}_{i\in I}$, $\varphi
_{i}:\Omega \rightarrow \Omega$, be a family of injective
maps such that for every $i\in I$ there exists some $0<s_{i}<1$ such
that
\begin{equation}
\label{cont} \left| \varphi _{i}(x)-\varphi _{i}(y)\right| \leq
s_{i}\left| x-y\right|.
\end{equation}
Functions satisfying (\ref{cont}) are called contractive. We will
further assume that the mappings $\varphi _{i}$ are uniformly
contractive, that is, $s=\sup\{s_i:i \in I\} <1$ and conformal. Conformality here
stands for $|\varphi_i'|^n=|J\varphi_i|$, where $J$ is the Jacobian
and the norm in the left side is the usual ``sup-norm'' for linear
mappings. This definition is usually referred as
$1$-quasiconformality, see e.g. \cite{V1}. By Theorem 4.1 of
\cite{R} conformal maps on subsets of $\mathbb{R}^{n},n\geq2,$ are
$C^\infty$. Assume also that there exists a compact set $X\subset
\Omega $ such that $\textmd{int}(X)\neq \emptyset $ with the
property that $\varphi _{i}(X)\subset X$ for all $i\in I$. Notice
that for $\Omega=\R^n, n\geq3,$ conformal, contractive mappings are
similitudes, which means that equality holds in (\ref{cont}). We
will call a family of functions $\{\varphi_i\}_{i\in I}$, as
described above, a \emph{conformal iterated function system} (CIFS)
if it satisfies the following property.
\begin{osc}
There exists a non-empty open set $U\subset X$ (in the relative
$X-$topology) such that $\varphi _{i}(U)\subset U$ for every $i\in
I$ and $\varphi _{i}(U)\cap \varphi _{j}(U)= \emptyset $ for
every pair $i \neq j\in I$.
\end{osc}

For $w=(i_{1},..,i_{m})\in I^{m}$, denote $\varphi _{w}=\varphi
_{i_{1}}\circ ..\circ \varphi _{i_{m}}$ and notice that
\begin{equation*}
d(\varphi _{w}(X))\leq s^{m}d(X).
\end{equation*}
Now define the mapping $\pi :I^{\infty }\rightarrow X$ such that
\begin{equation*}
\pi (w)=\underset{m\geq 1}{\bigcap }\varphi _{w|_{m}}(X).
\end{equation*}
The \emph{limit set} of the CIFS is defined as,
\begin{equation*}
E=\pi (I^{\infty })=\underset{w\in I^{\infty }}{\bigcup }\underset{m\geq 1}{%
\bigcap }\varphi _{w|_{m}}(X).
\end{equation*}

We will be interested in finite CIFS, where $\Omega \subset
\mathbb{R}^{n},n\geq 2$. The following important property of these
function systems follows from smoothness of the mappings $\varphi _{i}$, for a
proof see \cite{MU}, Lemma 2.2.
\begin{bdp}
There exists some $K\geq 1$ such that
\begin{equation*}
| \varphi _{w}^{^{\prime }}(x)| \leq K| \varphi _{w}^{^{\prime
}}(y)| \text{ for }w\in I^{\ast }\text{ and }x,y\in \Omega,
\end{equation*}
\end{bdp}

Finally we state two properties of CIFS that are going to be used
often in the proofs. In both properties constants depend only on the
initial CIFS parameters. The first one is a direct consequence of
BDP and the connectedness of $\Omega$. Since finite CIFS are
controlled Moran constructions, it follows by \cite{KV} that (CIFS
2) is equivalent to the OSC.

\begin{prpr}[\textbf{CIFS 1}]
\label{cifs1} There exists some constant $D \geq 1$ such that
\begin{equation*}
D^{-1}\| \varphi _{w}^{^{\prime }}\| \leq d(\varphi _{w}(E))\leq D\|
\varphi _{w}^{^{\prime }}\| \text{ for }w\in I^{\ast }.
\end{equation*}
Here $\|\varphi'_w\|=\textmd{sup}_{x\in \Omega}|\varphi_w'(x)|$.
\end{prpr}
\begin{prpr}[\textbf{CIFS 2}]
\label{cifs2} Denote
$$I(x,r)=\{w \in I^{\ast }:\varphi_w(E) \cap B(x,r)\neq 0 \text{ and }
d(\varphi_w(E))\leq r < d(\varphi_{w|_{|w|-1}}(E))\},$$where
$\varphi_0=id$. There exist a positive number $N\in \mathbb{N}$ and
a constant $C>0$, such that for every $x\in \mathbb{R}^{n}$ and
every $ 0<r\leq1$
\begin{enumerate}
\item $\textmd{card}(I(x,r))\leq N$, where $\textmd{card}(\cdot)$ denotes
cardinality,
\item $Cr\leq d(\varphi _{w}(E))\leq r$ for $w\in I(x,r)$,
\item $E\cap B(x,r)\subset \underset{w\in I(x,r)}{\bigcup }\varphi
_{w}(E)$.
\end{enumerate}
\end{prpr}
The main result of this section reads as follows.
\begin{thm}
\label{cthm} Let $E\subset \mathbb{R}^{n},n\geq 2,$ be the limit set
of a given finite CIFS such that every conformal map $F:\Omega
\rightarrow \Rn$ satisfies
\begin{equation}
\label{conf}
 F(\Omega \cap B(x,r)\cap (V+x))\cap E^c \neq \emptyset\text{ for all
}x\in \Rn, r>0 \text{ and } V\in G(n,m).
\end{equation}
Then $E$ is $V$-directed porous for all $V\in G(n,m)$.
\end{thm}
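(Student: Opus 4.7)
The plan is a contradiction-plus-blow-up argument. Assume that $V$-directed porosity fails for some $V \in G(n,m)$: there exist sequences $x_k \in E$, $r_k > 0$, and $c_k \downarrow 0$ such that every $y \in V + x_k$ with $|y - x_k| \leq (1-c_k)r_k$ satisfies $B(y, c_k r_k) \cap E \neq \emptyset$. Passing to a subsequence, $r_k \to r_\infty \in [0, d(E)]$. If $r_\infty > 0$, the compactness of $(x_k) \subset E$ together with $c_k \to 0$ immediately yields $(V + x_\infty) \cap \{y : |y - x_\infty| < r_\infty\} \subset E$, directly contradicting (\ref{conf}) applied to $F = \textrm{id}$. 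So the substantive case is $r_k \to 0$.

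Rescale by $\tilde{E}_k := (E - x_k)/r_k$; the porosity failure reads $\textrm{dist}(y', \tilde{E}_k) \leq c_k$ for every $y' \in V$ with $|y'| \leq 1 - c_k$. To describe the blow-up I invoke CIFS~2 to cover $E \cap B(x_k, 2r_k) \subset \bigcup_{w \in I(x_k, 2r_k)} \varphi_w(E)$ with at most $N$ terms, and rescale the corresponding contractions: $\Phi_k^{(j)}(u) := (\varphi_{w_k^{(j)}}(u) - x_k)/r_k$. CIFS~1 combined with BDP pinches $|(\Phi_k^{(j)})'|$ in a fixed compact interval $[C^{-1}, C]$ throughout $\Omega$, so by Arzelà--Ascoli together with the compactness of conformal families (Liouville rigidity when $n \geq 3$, Montel--Hurwitz when $n = 2$), I pass to subsequences so that $\Phi_k^{(j)} \to \Phi_\infty^{(j)}$ locally uniformly on $\Omega$, with each $\Phi_\infty^{(j)} : \Omega \to \Rn$ an injective conformal map. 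Simultaneously $\tilde{E}_k \cap B(0,2)$ Hausdorff-converges to a compact $\tilde{E}_\infty$, and the cover gives $\tilde{E}_\infty \subset \bigcup_{j=1}^{N'} \Phi_\infty^{(j)}(E)$; passing to the limit in the porosity failure produces $V \cap B(0,1) \subset \tilde{E}_\infty$.

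The finish is a Baire-category plus explicit-construction step. Since $V \cap B(0,1)$ is a complete metric space covered by finitely many closed sets $\Phi_\infty^{(j)}(E)$, at least one of them contains a nonempty relatively open subset; pick $j$, $y_0 \in V \cap B(0,1)$ and $\rho > 0$ with $V \cap B(y_0, \rho) \subset \Phi_\infty^{(j)}(E)$, and write $\Phi := \Phi_\infty^{(j)}$. I then build a similarity $H(u) := y_0 + \lambda_0 O(u - u_0)$, with $u_0 \in \Omega$, $O \in O(n)$ mapping a fixed $V_0 \in G(n,m)$ onto $V$, and $\lambda_0 > 0$ small enough that $H(\Omega) \subset \Phi(\Omega)$ (this is possible because $y_0 \in \Phi(E) \subset \Phi(X) \subset \Phi(\Omega)$ and $\Phi(X)$ is a compact subset of the open set $\Phi(\Omega)$). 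For the affine disk $P := \Omega \cap B(u_0, r_0) \cap (V_0 + u_0)$, with $r_0$ chosen so that $B(u_0, r_0) \subset \Omega$ and $\lambda_0 r_0 \leq \rho$, one has $H(P) \subset V \cap B(y_0, \rho) \subset \Phi(E)$. The conformal composition $F := \Phi^{-1} \circ H : \Omega \to \Rn$ therefore satisfies $F(P) \subset E$, which flatly contradicts (\ref{conf}) applied to $F$ with $x = u_0$, $r = r_0$, and $V = V_0$.

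The main obstacle is the second paragraph: pinning down that the rescaled CIFS maps subsequentially converge to injective conformal limits on all of $\Omega$ (BDP handles the derivative control, but preserving injectivity in the limit in dimension two requires Hurwitz's theorem), and ensuring that the Hausdorff limit $\tilde{E}_\infty$ is in fact captured by the finitely many images $\Phi_\infty^{(j)}(E)$ so that Baire category can be invoked to produce the affine disk used in the final contradiction.
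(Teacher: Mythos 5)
Your argument is correct, and it reaches the contradiction by a genuinely different organization than the paper. The paper isolates a quantitative one-piece statement (Lemma \ref{confim}): for a single cylinder $\varphi_w(E)$ one always finds a hole centred on $x+V$ of size $a(\beta)s$, proved by the same contradiction-plus-blow-up scheme you use (rescaling by $\|\varphi_{w_j}'\|^{-1}$, BDP giving uniform bi-Lipschitz bounds, Ascoli--Arzel\`a producing a single conformal limit $F$ with $F(E)$ containing a relative ball of $V_y$, then applying (\ref{conf}) to $F^{-1}$). Theorem \ref{cthm} is then deduced by iterating this lemma over the at most $N$ cylinders of $I(x,r)$ furnished by (CIFS 2), nesting the holes and tracking the constants $a_1, a(a_1)a_1,\dots$. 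You instead negate the global porosity directly, blow up all $\leq N$ cylinders simultaneously, and replace the iteration by a Baire category argument on the cover $V\cap B(0,1)\subset\bigcup_j\Phi_\infty^{(j)}(E)$; this trades the paper's explicit hole-size bookkeeping (and its reusable quantitative lemma) for a shorter, softer finish. Two of your flagged worries are non-issues: injectivity of the limits does not need Hurwitz, since the uniform bi-Lipschitz bounds coming from BDP together with (CIFS 1)--(CIFS 2) pass to the limit exactly as in the paper's property (F2); and the inclusion $\tilde{E}_\infty\subset\bigcup_j\Phi_\infty^{(j)}(E)$ is the same pigeonhole-plus-uniform-convergence argument as the paper's step (T3). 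One point where you are actually more careful than the paper: your final test map $F=\Phi^{-1}\circ H$ is built to have domain exactly $\Omega$ and to send a genuine affine disk $\Omega\cap B(u_0,r_0)\cap(V_0+u_0)$ into $E$, whereas the paper applies (\ref{conf}) to $F^{-1}$, whose natural domain is $F(\Omega)$ rather than $\Omega$, and leaves that identification implicit. Minor quibbles only: the failure of porosity forces $r_k\lesssim d(E)$ (so the bounded-$r_k$ reduction should be justified by noting that for $r_k(1-2c_k)>d(E)$ a hole exists trivially since $E$ is bounded), and one should fix the cardinality of $I(x_k,2r_k)$ along a subsequence before labelling the maps $\Phi_k^{(j)}$; both are routine.
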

Notice that Theorem \ref{mpthm} follows immediately from Theorem
\ref{cthm} since $m$-purely unrectifiable sets satisfy (\ref{conf}).
The main step in proving Theorem \ref{cthm} is the following Lemma.
\begin{lm}
\label{confim} Let $E\subset \mathbb{R}^{n}$ be the limit set of a
given CIFS such that (\ref{conf}) holds for every conformal map
$F:\Omega \rightarrow \Rn$. Then for every $V\in G(n,m)$ and every
$\beta>0$ there exists some $a(\beta)>0$ such that for every $x\in
\mathbb{R}^{n}, 0<r\leq 1,w\in I(x,r),y\in x+V$ and $s\geq \beta
d(\varphi _{w}(E))$ satisfying
\begin{equation*}
B(y,s)\subset B(x,r),
\end{equation*}
there exists $z\in x+V$ \ and $l\geq a(\beta)s$ such that
\begin{equation*}
B(z,l)\subset B(y,s)\backslash \varphi _{w}(E).
\end{equation*}
\end{lm}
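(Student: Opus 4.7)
The plan is a contradiction and rescaling argument, normalizing $\varphi_w$ so that $B(y,s)$ becomes the unit ball, and extracting a conformal limit via a normal family argument.

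Assume the conclusion fails: there exist $\beta > 0$ and $V \in G(n,m)$ together with sequences $(x_k, r_k, w_k, y_k, s_k)$ fulfilling the hypotheses, yet no admissible $z, l$ with $l \geq s_k/k$ exists. Introduce the affine maps $T_k(u) = (u - y_k)/s_k$, so that $T_k(B(y_k, s_k)) = B(0,1)$ and $T_k(V+x_k) = V + T_k(x_k)$, and set $\tilde\varphi_k = T_k \circ \varphi_{w_k}: \Omega \to \R^n$, which remains conformal. Combining $s_k \geq \beta d(\varphi_{w_k}(E))$ with CIFS 1 ($\|\varphi_{w_k}'\| \asymp d(\varphi_{w_k}(E))$), CIFS 2 ($d(\varphi_{w_k}(E)) \asymp r_k \geq s_k$), and the BDP yields uniform two-sided bounds
$$c_1 \leq |\tilde\varphi_k'(u)| \leq c_2(\beta), \qquad u \in \Omega,$$
with $c_1$ depending only on the CIFS parameters. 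One may also assume $\varphi_{w_k}(E) \cap B(y_k, s_k/2) \neq \emptyset$, since otherwise $z = y_k$, $l = s_k/2$ works trivially; this anchors $\tilde\varphi_k(X)$ inside a bounded region of $\R^n$.

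By Arzel\`a--Ascoli, $\{\tilde\varphi_k\}$ is a normal family on $\Omega$; pass to a subsequence with $\tilde\varphi_k \to F$ uniformly on compacta in $\Omega$ and $T_k(x_k) \to x^*$. The limit $F: \Omega \to \R^n$ is again conformal, non-constant, and injective with $|F'| \geq c_1$: for $n \geq 3$ each $\tilde\varphi_k$ extends to a M\"obius transformation and one invokes Liouville's theorem, and for $n=2$ one applies Hurwitz's theorem to these holomorphic injections. I next claim $B(0,1) \cap (V + x^*) \subset F(E)$: fix $z$ in this set with $|z| < 1$, and pick $z_k \in V + T_k(x_k)$ with $z_k \to z$ and $B(z_k, 1/k) \subset B(0,1)$. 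The failure assumption forces $B(z_k, 1/k) \cap \tilde\varphi_k(E) \neq \emptyset$; selecting $p_k \in E \subset X$ with $|\tilde\varphi_k(p_k) - z_k| \leq 1/k$, extracting $p_k \to p^\star \in E$ by compactness, and using uniform convergence gives $F(p^\star) = z$.

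Hence $F^{-1}(B(0,1) \cap (V+x^*))$ is a relatively open subset of the conformal image $F^{-1}(V+x^*)$ which is entirely contained in $E$; reparametrizing this set as the image of a flat plane piece under a conformal map with domain $\Omega$ (via M\"obius extension for $n \geq 3$, or a suitable conformal chart for $n=2$) directly contradicts (\ref{conf}), completing the proof. The main obstacle is this normal-family step and the production of a non-degenerate conformal limit $F$ on $\Omega$: it rests on the two-sided bound on $|\tilde\varphi_k'|$ coming from CIFS 1 and the BDP, and requires treating $n=2$ and $n \geq 3$ separately because conformality has different structural meanings in these cases. Once $F$ is in hand, turning the density statement above into a violation of (\ref{conf}) is routine.
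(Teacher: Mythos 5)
Your proposal is correct and follows essentially the same route as the paper: a proof by contradiction, rescaling $\varphi_{w_k}$ so that $B(y_k,s_k)$ has unit size (your factor $s_k^{-1}$ is comparable to the paper's $\|\varphi_{w_k}'\|^{-1}$ via (CIFS 1) and the hypothesis $s_k\geq\beta\, d(\varphi_{w_k}(E))$), uniform bi-Lipschitz bounds from the BDP, an Arzel\`a--Ascoli extraction of a conformal limit $F$, and the conclusion that a relatively open piece of a plane lies in $F(E)$, contradicting (\ref{conf}). The only difference is bookkeeping: you prove the inclusion of the plane piece into $F(E)$ by a direct diagonal argument, where the paper introduces the intermediate sets $P_j$, $T_j$ and the limit set $T$ with properties (T1)--(T3); the content is the same.
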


\begin{proof}Without loss of generality assume that $E \subset B(0,1)$.
We will prove Lemma \ref{confim} in the case where $V$ is some
$m$-coordinate plane, say $V=\{x\in \mathbb{R}^{n}:x^{i}=0$ for
$i=m+1,..,n\}$. The general statement follows after appropriate
rotations of the set E. Let $V_{x}=x+V$ for $x \in \mathbb{R}^{n}$.
By way of contradiction, suppose that Lemma \ref{confim} does not
hold. Then there exists some constant $\eta
>0$ such that for every $j \in \N $ there exist sequences
\begin{eqnarray*}
\{x_{j}\}_{j\in \N} &\in &B(0,1),\\
\{r_{j}\}_{j\in \N}&\in & (0,1], \\
\{w_{j}\}_{j\in \N} &\in &I^{\ast \text{ }}\text{ such that
}w_{j}\in
I(x_{j},r_{j})\text{ for every }j\in \mathbb{N} ,\\
\{y_{j}\}_{j\in \N} &\in &B(0,1)\cap V_{x_{j}},\\
\{s_{j}\}_{j\in \N}&\in & (0,1],
\end{eqnarray*}
satisfying for all $j \in \N$ the following three conditions.
\begin{description}
\item[C1] $B(y_{j},s_{j})\subset B(x_{j},r_{j})$.

\item[C2] $ s_{j} \geq \eta d(\varphi _{w_{j}}(E))$.

\item[C3] For every $z\in V_{x_{j}}$ the condition
\begin{equation*}B(z,l)\subset B(y_{j},s_{j})\backslash \varphi _{w_{j}}(E)
\end{equation*}
implies $l<\frac{1}{j}s_{j}$.
\end{description}
By passing to an appropriate subsequence, if necessary, we find
$y\in B(0,1)$ such that
\begin{equation*}
y_{j}\rightarrow  y.
\end{equation*}
From now on we will denote $V_{x_{j}}=V_{y_{j}}$ by $V_{j}$. Let
$\Psi _{j}:\mathbb{R}^{n}\rightarrow \mathbb{R}^{n}$ be defined for
$z\in \mathbb{R}^{n}$ as,
\begin{equation*}
\Psi _{j}(z)=\| \varphi _{w_{j}}^{^{\prime }}\|
^{-1}(z-y_{j})+y_{j}.
\end{equation*}
We are going to use the following properties of $\Psi _{j}$:

\begin{description}
\item[$\Psi1$] For all pairs $z,w\in \mathbb{R}^{n}$
\begin{equation*}
| \Psi _{j}(w)-\Psi _{j}(z)| = \| \varphi _{w_{j}}^{^{\prime }}\|
^{-1}| w-z|.
\end{equation*}

\item[$\Psi2$] For every $\delta >0$, and $V_{j}(\delta )=\{x\in \R^n: d(x,V_j)
<\delta\}$,
\begin{equation*}
\Psi _{j}(V_{j})=V_{j}\text{ and }\Psi _{j}(V_{j}(\delta
))=V_{j}(\delta \| \varphi _{w_{j}}^{^{\prime }}\| ^{-1}).
\end{equation*}
\item[$\Psi3$] For every $r>0$ and every $z\in V_{j}$,
\begin{equation*}
\Psi _{j}(B(z,r))=B(\Psi _{j}(z),\| \varphi _{w_{j}}^{^{\prime }}\|
^{-1}r).
\end{equation*}
\end{description}
Denote for $j \in \N$,
\begin{equation}
\label{pj} P_{j}=V_{j}(2s_{j}j^{-1})\cap \varphi _{w_{j}}(E)\cap
B(y_{j},s_{j})
\end{equation}
and
\begin{equation}
T_{j}=\Psi _{j}(P_{j}).
\end{equation}
By (C3), for every $z\in V_{j}\cap B(y_{j},s_{j})$
\begin{equation}
\label{gap} B(y_{j},s_{j})\cap B(z,2s_{j}j^{-1})\cap \varphi
_{w_{j}}(E)\neq \emptyset.
\end{equation}
Using (\ref{gap}) we can also show that for all $q\in V_{j}\cap
B(y_{j},\| \varphi _{w_{j}}^{^{\prime }}\| ^{-1}s_{j})$ and every
$r\geq 2\| \varphi _{w_{j}}^{^{\prime }}\| ^{-1}{j}^{-1}s_{j}$,
\begin{equation}\label{int2}
B(q,r)\cap T_{j}\neq \emptyset.
\end{equation}
To see this, let
\begin{equation*}\widetilde{q}=(\| \varphi
_{w_{j}}^{^{\prime }}\| (q^{1}-y_{j}^{1})+y_{j}^{1},..,\| \varphi
_{w_{j}}^{^{\prime }}\|
(q^{m}-y_{j}^{m})+y_{j}^{m},y_{j}^{m+1},..,y_{j}^{n}),
\end{equation*}
where $q=(q^{1},..,q^{m},y_{j}^{m+1},..,y_{j}^{n})\in V_{j}\cap
B(y_{j},\| \varphi _{w_{j}}^{^{\prime }}\| ^{-1}s_{j})$. Then $\Psi
_{j}(\widetilde{q})=q$ and for $i=1,..,m$,
\begin{eqnarray*}
| \widetilde{q}^{i}-y_{j}^{i}|  =\| \varphi _{w_{j}}^{^{\prime }}\|
| q^{i}-y_{j}^{i}| \leq \| \varphi _{w_{j}}^{^{\prime }}\| \|
\varphi _{w_{j}}^{^{\prime }}\| ^{-1}s_{j}.
\end{eqnarray*}
This implies that $\widetilde{q}\in V_{j}\cap B(y_{j},s_{j})$.
Therefore, by (\ref{gap}), we get
\begin{equation*}
B(y_{j},s_{j})\cap B(\widetilde{q},2s_{j}j^{-1})\cap \varphi
_{w_{j}}(E)\neq \emptyset.
\end{equation*}
Consequently
\begin{equation*}
\Psi _{j}(V_{j}(2s_{j}j^{-1})\cap B(y_{j},s_{j})\cap \varphi
_{w_{j}}(E)\cap B(\widetilde{q},2s_{j}j^{-1}) )) \neq \emptyset
\end{equation*}
and by ($\Psi3$)
\begin{equation*}
B(q,2\| \varphi _{w_{j}}^{^{\prime }}\| ^{-1}s_{j}j^{-1})\cap \Psi
_{j}(P_{j}) \neq \emptyset.
\end{equation*}
Hence
\begin{equation*}
B(q,r)\cap T_{j} \neq \emptyset \text{ for }r\geq 2\| \varphi
_{w_{j}}^{^{\prime }}\| ^{-1}j^{-1}s_{j}.
\end{equation*}

Next we will show that there exists some constant $B>0$ such that
for every $j\in \mathbb{N}$, large enough,
\begin{equation}
\label{tjbound} B^{-1}\leq d(T_{j})\leq B.
\end{equation}
To prove (\ref{tjbound}) let $ p_{j,}q_{j}\in V_{j}\cap
B(y_{j},s_{j})$ such that
\begin{equation*}
p_{j}=(y_{j}^{1}-(s_{j}-s_{j}j^{-1}),y_{j}^{2},..,y_{j}^{n})
\end{equation*}
and
\begin{equation*}
q_{j}=(y_{j}^{1}+(s_{j}-s_{j}j^{-1}),y_{j}^{2},..,y_{j}^{n}).
\end{equation*}
Recalling (\ref{gap}) we notice that for every
\begin{equation*}
e\in B(y_{j},s_{j})\cap B(p_{j}, 2s_{j}j^{-1})\cap \varphi
_{w_{j}}(E)
\end{equation*}
and
\begin{equation*}
d\in B(y_{j},s_{j})\cap B(q_{j}, 2s_{j}j^{-1})\cap \varphi
_{w_{j}}(E),
\end{equation*}
we have
\begin{equation*}
| e-d| \geq |p_j-q_j|-|p_j-e|-|q_j-d|\geq 2s_j-6s_j j^{-1}\geq
\frac{s_{j}}{2},
\end{equation*}for $j\geq4$. Hence
\begin{equation*}
d(P_{j})=d(V_{j}(2s_{j}j^{-1})\cap \varphi _{w_{j}}(E)\cap
B(y_{j},s_{j}))\geq \frac{s_{j}}{2} \text{ where }j\geq4.
\end{equation*}
By (C2) we also deduce that
\begin{equation*}
d(P_{j})\leq d(\varphi _{w_{j}}(E))\leq \eta ^{-1}s_{j}.
\end{equation*}
Combining the two previous estimates we derive
\begin{equation}
\label{pes} \frac{s_{j}}{2}\leq d(P_{j})\leq \eta ^{-1}s_{j}.
\end{equation}
Now by (\ref{pes}), (C2) and (CIFS 1) it follows that
\begin{eqnarray*}
d(T_{j}) &=&d(\Psi _{j}(P_{j}))=\| \varphi _{w_{j}}^{^{\prime
}}\| ^{-1}d(P_{j}) \\
&\geq &\| \varphi _{w_{j}}^{^{\prime }}\| ^{-1}\frac{s_{j}}{2}%
\geq \frac{\eta }{2}\| \varphi _{w_{j}}^{^{\prime }}\|
^{-1}d(\varphi _{w_{j}}(E)) \\
&\geq &\frac{\eta }{2}D^{-1}\| \varphi _{w_{j}}^{^{\prime }}\|
^{-1}\| \varphi _{w_{j}}^{^{\prime }}\|
\end{eqnarray*}
and, by (CIFS 1),
\begin{equation*}
d(T_{j})=\| \varphi _{w_{j}}^{^{\prime }}\| ^{-1}d(P_{j})\leq \|
\varphi _{w_{j}}^{^{\prime }}\| ^{-1}d(\varphi _{w_{j}}(E))\leq D.
\end{equation*}
Therefore for all $j \in \mathbb{N},j\geq 4,$
\begin{equation*}
B^{-1}\leq d(T_{j})\leq B
\end{equation*}
where $B=\max \{D,2\eta ^{-1}D\}$. The following fact follows
immediately from (CIFS 1), (C2) and (\ref{pes}), since $P_j \subset
\varphi _{w_{j}}(E)$. We state it separately for the convenience of
the reader. For all $j \in \mathbb{N},j\geq 4,$
\begin{equation}
\label{f} \eta D^{-1}\| \varphi _{w_{j}}^{^{\prime }}\| \leq
s_{j}\leq 2D\| \varphi _{w_{j}}^{^{\prime }}\|.
\end{equation}

For every $j\in \mathbb{N}$ the functions $F_{j}:\Omega \rightarrow
\mathbb{R}^{n}$ are defined as
\begin{equation*}
F_{j}:=\Psi _{j}\circ \varphi _{w_{j}}.
\end{equation*}
Observe that for all $j \in \N$
\begin{description}
\item [F1]$F_{j}$ are conformal,

\item[F2] $F_{j}$ are bi-Lipschitz  with constants not depending on $j$.
\end{description}
Property (F2) follows from BDP and the mean value theorem. To see
this, for all $z,w\in \Omega$,
\begin{eqnarray*}
K^{-1}|z-w| &\leq &\Vert \varphi _{w_{j}}^{^{\prime }}\Vert
^{-1}\Vert
(\varphi _{w_{j}}^{^{-1}})^{^{\prime }}\Vert ^{-1}|z-w| \\
&\leq &\Vert \varphi _{w_{j}}^{^{\prime }}\Vert ^{-1}|\varphi
_{w_{j}}(z)-\varphi _{w_{j}}(w)| \\
&=&|F_{j}(z)-F_{j}(w)| \\
&\leq &|z-w|.
\end{eqnarray*}
Using the Ascoli-Arzela theorem we are now able to find some
uniformly convergent subsequence of $F_{j}$, which for the sake of
simplicity we will keep on denoting by $F_{j}$, such that
\begin{equation*}
F_{j}\rightarrow F\text{ and }F:\Omega \rightarrow \mathbb{R}^{n}
\text{ is conformal and bi-Lipschitz}.
\end{equation*}
Notice that by standard complex analysis when $n=2$, and basic
properties of M\"{o}bius maps for $n \geq 3$, it follows that the
map $F^{-1}:$ $\mathbb{R}^{n}\rightarrow \Omega $ is also conformal.

Now define
\begin{equation*}
\mathcal{G}=\{\alpha :\mathbb{N\rightarrow }\overset{\infty }{\underset{j=1}{%
\bigcup }}T_{j}\text{ such that }\alpha (j)\in T_{j}\text{ for all
}j\in \mathbb{N}\}.
\end{equation*}
and,
\begin{multline*}
T=\{t\in \mathbb{R}^{n}:\text{ there exist increasing } k:\N
\rightarrow \N  \text{ and } \\ \alpha \in \mathcal{G}\text{ such
that } \alpha (k(j))\rightarrow t\}.
\end{multline*}
The set $T$ has the following properties,

\begin{description}
\item[T1] $y\in T$.

Recall that $y$ is the limit of the sequence $y_j$. By
$(\ref{int2})$,
\begin{equation*}
B(y_{j},2\Vert \varphi _{w_{j}}^{^{\prime }}\Vert
^{-1}s_{j}j^{-1})\cap T_{j}\neq \emptyset\text{ for all }j\in
\mathbb{N}.
\end{equation*}
Therefore, by (\ref{f}), there exists some sequence
$\{t_{j}\}_{j\geq 4}$ such that for all $j\in \mathbb{N}, j\geq 4,$
\begin{equation*}
t_{j}\in T_{j}\cap B(y_{j},4Dj^{-1}).
\end{equation*}
Since $y_{j}\rightarrow y$, we also get $t_{j}\rightarrow y$ and
consequently $ y\in T$.

\item[T2] $B(y,D^{-1}\dfrac{\eta }{100})\cap V_{y}\subset T$.

Suppose that there exists some $a \in B(y,D^{-1}\dfrac{\eta
}{100})\cap V_{y}$ such that $a\notin T$. Then there exist
$r_{0}<D^{-1}\dfrac{\eta }{ 100}$ and $j_{0}\in \mathbb{N}$ such
that for all $j\geq j_{0}$,
\begin{equation*}
B(a,r_{0})\cap T_{j}=\emptyset.
\end{equation*}
Now choose some $j_{1}\in \mathbb{N}$ such that for all $j\geq
j_{1}$,
\begin{equation*}
|y_{j}-y|\leq D^{-1}\dfrac{\eta }{100}.
\end{equation*}
Then for all such $j$,
\begin{equation*}
B(a,r_{0})\subset B(y_{j},\Vert \varphi _{w_{j}}^{^{\prime }}\Vert
^{-1}s_{j}).
\end{equation*}
To see this, take $b\in B(a,r_{0})$. By (\ref{f}),
\begin{eqnarray*}
|b-y_{j}| &\leq &|b-a|+|a-y|+|y-y_{j}| \\
&\leq &3D^{-1}\dfrac{\eta }{100} \\
&\leq &\Vert \varphi _{w_{j}}^{^{\prime }}\Vert ^{-1}s_{j}.
\end{eqnarray*}
Choose $j_{2}\in \mathbb{N},j_{2}\geq j_{1}$, such that for all
$j\geq j_{2}$,
\begin{equation*} |y_{j}-y|\leq \dfrac{r_{0}}{2}.
\end{equation*}
If $a=(a^{1},..,a^{m},y^{m+1},..,y^{n})\in V_{y}$ let
$\widetilde{a}_{j}=(a^{1},..,a^{m},y_j^{m+1},..,y_j^{n})\in V_{j}$
and notice that \begin{equation*}| \widetilde{a}_{j}-a| \leq
|y-y_{j}|.
\end{equation*}
Then for $ j\geq j_{2}$ and $r_{1}=\dfrac{r_{0}}{2}$, by triangle
inequality,
\begin{equation}
\label{t1} \widetilde{a}_{j}\in B(y_{j},\Vert \varphi
_{w_{j}}^{^{\prime }}\Vert ^{-1}s_{j})
\end{equation}
and
\begin{equation}
\label{t2} B(\widetilde{a}_{j},r_{1})\subset B(a,r_{0}).
\end{equation}
Hence for $j_{\ast }\in \mathbb{N}$ big enough satisfying
\begin{equation*}
j_{\ast }\geq \max \{j_{0},j_{2}\} \text{ and } 2\Vert \varphi
_{w_{j_{\ast }}}^{^{\prime }}\Vert ^{-1}\dfrac{s_{j_{\ast
}}}{j_{\ast }}\leq r_{1}
\end{equation*}
we get,
\begin{enumerate}
\item $B(a,r_{0})\cap T_{j_{\ast }}=\emptyset$,
\item $\widetilde{a}_{j_{\ast }}\in V_{j_{\ast }}\cap B(y_{j_{\ast
}},\Vert \varphi _{w_{j_{\ast }}}^{^{\prime }}\Vert ^{-1}s_{j_{\ast
}})$,
\item $B(\widetilde{a}_{j_{\ast }},r_{1})\subset B(a,r_{0})$.
\end{enumerate}
Consequently
\begin{equation*}
B(\widetilde{a}_{j_{\ast }},2\Vert \varphi _{w_{j_{\ast
}}}^{^{\prime }}\Vert ^{-1}\dfrac{s_{j_{\ast }}}{j_{\ast }})\cap
T_{j_{\ast }}=\emptyset
\end{equation*}
which contradicts (\ref{int2}).

\item[T3] $T\subset F(E)$.

Let $t\in T$, then there exist some increasing function
$k(j):\mathbb{N\rightarrow N}$ and some $\alpha \in \mathcal{G}$
such that \begin{equation*}
 \alpha (k(j))\in T_{k(j)}\subset \Psi
_{k(j)}(\varphi _{w_{k(j)}}(E))=F_{k(j)}(E) \text{ and }\alpha
(k(j))\rightarrow t.
\end{equation*}
Therefore there exists a sequence $\{e_{j}\}_{j=1}^{\infty}\in E$
such that $F_{k(j)}(e_{j})=\alpha(k(j))$. Since the limit set $E$ is
compact there exists some subsequence of $\{e_{j}\}_{j=1}^{\infty}$
converging to some point $e\in E$. To simplify notation assume that
$e_{j}\rightarrow e$. Finally because the convergence
$F_{k(j)}\rightarrow F$ is uniform, we also deduce that
\begin{equation*}
\alpha (k(j))=F_{k(j)}(e_{j})\rightarrow F(e),
\end{equation*}
which implies that $t=F(e)$.

\end{description}

Properties (T2) and (T3) imply
\begin{equation*}
F^{-1}(B(y,D^{-1}\dfrac{\eta }{100})\cap V_{y})\subset
F^{-1}(T)\subset E.
\end{equation*}
Since $F^{-1}$ is conformal, this contradicts (\ref{conf}),
finishing the proof of Lemma \ref{confim}.
\end{proof}

\begin{proof}[Proof of Theorem \ref{cthm}] Let $x\in \mathbb{R}^{n}$ and $ 0<r <1$. For $I(x,r)\subset I^{\ast }, N \in \N$ as in (CIFS
2) we get
\begin{equation*}
I(x,r)=\{w_{1},...w_{m}\}\text{ for some }m\leq N \text{ and }
d(\varphi _{w_{i}}(E))\leq r\text{ for }i=1,..,m.
\end{equation*}
Applying Lemma \ref{confim} for $b=1$, as $r \geq d(\varphi
_{w_{1}}(E))$, there exist $z_{1}\in V_{x}$ and $l_{1}\geq 0$ such
that
\begin{equation*}
B(z_{1},l_{1})\subset B(x,r)\backslash \varphi _{w_{1}}(E)\text{ and
} l_{1}\geq a(1)r.
\end{equation*}
As
\begin{equation*}
r\geq d(\varphi _{w_{2}}(E))
\end{equation*}
we also get
\begin{equation*}
l_{1}\geq a(1)d(\varphi _{w_{2}}(E)).
\end{equation*}
Denote $a_{1}:=a(1)$. Again Lemma \ref{confim} implies that there
exist $z_{2}\in V_{x}$ and $l_{2}\geq 0$ satisfying
\begin{equation*}
B(z_{2},l_{2})\subset B(z_{1},l_{1})\backslash \varphi
_{w_{2}}(E)\subset B(x,r)\text{ and }l_{2}\geq a(a_{1})l_{1}.
\end{equation*}
As before
\begin{eqnarray*}
l_{2} &\geq &a(a_{1})a(1)r \\
&\geq &a(a_{1})a_{1}d(\varphi _{w_{_{3}}}(E)).
\end{eqnarray*}
In the same manner denote $a_{2}:=a(a_{1})a_{1}$. There exist $
z_{3}\in V_{x}$ and $l_{3}\geq 0$ such that
\begin{equation*}
B(z_{3},l_{3})\subset B(z_{2},l_{2})\backslash \varphi
_{w_{_{3}}}(E)
\end{equation*}
and
\begin{eqnarray*}
l_{3} &\geq &a(a_{2})l_{2} \\
&\geq &a(a_{2})a(a_{1})a_{1}r \\
&=&a(a_{2})a_{2}d(\varphi _{w_{_{4}}}(E)).
\end{eqnarray*}
Repeating the same arguments, after $m$ steps, we finally get that
there exist some $z_{m}\in V_{x}\cap B(x,r),l_{m}>0$ such that
\begin{equation*}
B(z_{m},l_{m})\subset B(z_{m-1},l_{m-1})\backslash \varphi
_{w_{m}}(E)
\end{equation*}
and
\begin{eqnarray*}
l_{m} &\geq &a(a_{m-1})l_{m-1} \\
&\geq &a(a_{m-1})...a(a_{1})a_{1}r.
\end{eqnarray*}
Therefore
\begin{eqnarray*}
B(z_{m},C^{\ast }r) &\subset &B(x,r)\backslash \underset{w\in
I(x,r)}{\bigcup }
\varphi _{w}(E) \\
&=&B(x,r)\backslash E
\end{eqnarray*}
where $C^{\ast }=a(a_{m-1})a_{m-1}=a(a_{m-1})...a(a_{1})a_{1}$ is a
constant depending only on the CIFS's initial parameters.
\end{proof}
\section{Geometric criteria for weak convergence}
We begin this section with an auxiliary result necessary to prove
Theorem \ref{msithm}.
\begin{thm}
\label{fst}Let $\mu $ be a finite Radon measure in $\Rn$ and
$K:\mathbb{R}^{n}\setminus \{0\}\rightarrow \mathbb{R}$ an
antisymmetric kernel satisfying (\ref{mg}) and (\ref{kg})
respectively.
\begin{enumerate}
\item The truncated singular integral
operators $T_{\varepsilon }$ associated to $\mu$ and $K$ converge
weakly in $\mathcal{X}_{Q}(\mathbb{R}^{n})$ if for any $V\in
TA(n,n-1)=\{V_{w}^{i}:i=1,..,n$ and $w\in \mathbb{R}^{n}\}$,
\begin{enumerate}\item $\mu(V)=0$,
\item there exists some positive number $a_{V} <1$ such that
\begin{equation}
\label{finitesum} \overset{\infty }{\underset{k=0}{\sum }}\mu
(S_{k}(a_{V},V))k<\infty,
\end{equation}
where $S_{k}(a_{V},V)=\{x\in \mathbb{R}^{n}:\overset{\infty
}{\underset{j=k+1}{ \sum }}a_{V}^{j}\leq d(x,V)< \overset{\infty
}{\underset{j=k}{\sum }} a_{V}^{j}\}.$
\end{enumerate}
\item The truncated singular integral operators $T_{\varepsilon }$,
associated to $\mu$ and $K$ converge weakly in
$\mathcal{X}_{B}(\mathbb{R}^{n})$ if for any sphere $C=S_x^R$,
centered at $x$ of radius $R$, \begin{enumerate}\item $\mu(C)=0$,
\item there exists some positive number $a_C <\textmd{min}\{1,R\}$
such that
\begin{equation}
\label{finitesum1} \overset{\infty }{\underset{k=0}{\sum }}\mu
(S_{k}(a_C,C))k<\infty ,
\end{equation}
where $S_{k}(a_C,C)=\{x\in B(x,R):\overset{\infty
}{\underset{j=k+1}{ \sum }}a_C^{j}\leq d(x,C)< \overset{\infty
}{\underset{j=k}{\sum }} a_C^{j}\}.$
\end{enumerate}
\end{enumerate}
\end{thm}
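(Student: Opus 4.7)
My plan is to verify the Cauchy property of the scalars $\Phi(\varepsilon) := \int T_\varepsilon(f)\, g\, d\mu$ as $\varepsilon \to 0^+$; weak convergence on the dense subspace then follows since $\R$ is complete. By bilinearity it suffices to take $f = \chi_A$ and $g = \chi_{A'}$ for two cubes $A, A' \in Q(\R^n)$. For $0 < \varepsilon_2 < \varepsilon_1$ the increment is
$$J := \Phi(\varepsilon_2) - \Phi(\varepsilon_1) = \int\!\!\int_{\varepsilon_2 < |x-y| \leq \varepsilon_1} \chi_A(y)\chi_{A'}(x)\, K(x-y)\, d\mu y\, d\mu x,$$
and the goal is $|J| \to 0$ as $\varepsilon_1 \to 0$, uniformly in $\varepsilon_2 < \varepsilon_1$.

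The first step exploits the antisymmetry of $K$: the Fubini swap $x \leftrightarrow y$ annihilates the symmetric half of the double indicator, leaving
$$2J = \int\!\!\int_{\varepsilon_2 < |x-y| \leq \varepsilon_1} \bigl[\chi_A(y)\chi_{A'}(x) - \chi_{A'}(y)\chi_A(x)\bigr]\, K(x-y)\, d\mu y\, d\mu x.$$
A short case analysis shows that the bracket is non-zero only when at least one of $x, y$ lies in $A \triangle A'$, and that in each such case the constraint $|x-y|\leq \varepsilon_1$ forces the point in the symmetric difference to lie within distance $\varepsilon_1$ of $\partial A \cup \partial A'$. For example, if $y \in A\setminus A'$, the bracket is non-zero only when $x \in A'$, and then $y \notin A'$ together with $|x-y|\leq \varepsilon_1$ gives $d(y, \partial A') \leq \varepsilon_1$.

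The second step is a uniform inner estimate. Setting $r_y := d(y, \partial A \cup \partial A')$, in each case above one has $|x-y| \geq r_y$ (the ``far'' point lies in a closed cube that the near point is excluded from). A standard annular decomposition using $|K(x-y)| \leq C_K|x-y|^{-(n-1)}$ together with the growth hypothesis $\mu(B(y,r)) \leq Cr^{n-1}$ yields
$$\int_{\max(r_y,\varepsilon_2) < |x-y| \leq \varepsilon_1} |x-y|^{-(n-1)}\, d\mu x \;\leq\; C\log(\varepsilon_1/r_y),$$
which in particular is independent of $\varepsilon_2$. Since $\mu(V) = 0$ for each face hyperplane, the locus $\{r_y = 0\}$ is $\mu$-null, so the estimate reduces to controlling $\int_{\{0 < r_y \leq \varepsilon_1\}} \log(\varepsilon_1/r_y)\, d\mu y$.

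The closing step invokes the shell hypothesis \eqref{finitesum}. Cubes in $Q(\R^n)$ have faces parallel to coordinate planes, so $\partial A \cup \partial A'$ lies in finitely many hyperplanes $V_1, \dots, V_L \in TA(n, n-1)$. Decomposing $\{0 < d(y, V_i) \leq \varepsilon_1\}$ into the tail $\bigcup_{k \geq k_0(\varepsilon_1)} S_k(a_{V_i}, V_i)$, with $k_0(\varepsilon_1) \to \infty$ as $\varepsilon_1 \to 0$, and using $d(y, V_i) \asymp a_{V_i}^k$ on $S_k$ so that $\log(\varepsilon_1/d(y,V_i))$ is dominated by $k$, I obtain
$$|J| \leq C\sum_{i=1}^L \sum_{k \geq k_0(\varepsilon_1)} k\, \mu\bigl(S_k(a_{V_i}, V_i)\bigr),$$
the tail of a convergent series, which tends to $0$. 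Part (2) is structurally identical after replacing hyperplanes with the two spheres bounding the balls $A$ and $A'$, invoking \eqref{finitesum1} in place of \eqref{finitesum}. The main obstacle is the first step: one must execute the case analysis carefully to confirm that antisymmetry, in conjunction with the smallness of $|x-y|$, genuinely localizes the whole integral to an $\varepsilon_1$-neighborhood of $\partial A \cup \partial A'$; once that is in place, the logarithmic inner bound and the shell summability fit together almost mechanically.
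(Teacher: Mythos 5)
Your proposal is correct and follows essentially the same route as the paper: antisymmetry plus a Fubini swap reduces the increment to integrals of $|K(x-y)|$ over $A\times A^{c}$ localized near $\partial A$, a dyadic annulus decomposition using (\ref{kg}) and the growth bound (\ref{mg}) gives the logarithmic inner estimate in terms of $d(\cdot,\partial A)$, and the shell hypothesis (\ref{finitesum}) makes that logarithm $\mu$-integrable (the paper phrases the conclusion as absolute convergence of $\int_{A}\int_{A^{c}}|K|\,d\mu\,d\mu$ rather than as a tail estimate, but this is the same mechanism). The one detail to watch in part (ii) is that the shells $S_{k}(a_{C},C)$ lie inside the ball, so the logarithm must be hung on the variable interior to the relevant ball --- which is always within $\varepsilon_{1}$ of the bounding sphere whenever your antisymmetrized bracket is nonzero --- exactly as the paper arranges by integrating over $A^{\circ}\times A^{c}$.
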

\begin{proof} We give the proof only for (i) since the proof of (ii) is almost identical. Denote $E=\textmd{spt}\mu$ and without loss of generality assume that $ E\subset
B(0,1/2)$ and $\mu(E) \leq 1$. Let
\begin{equation*}
f=\overset{l}{ \underset{i=1}{\sum }}a_{i}\chi _{Q_{i}}\text{
and } g=\overset{m}{\underset{j=1}{ \sum }}b_{j}\chi _{P_{j}}
\end{equation*}
where $a_{i},b_{j}\in \mathbb{R}$ and $Q_{i},P_{j}\in
Q(\mathbb{R}^{n})$. For $ 0<\delta  <\varepsilon $,
\begin{equation*}
\begin{split}
&\left| \int T_{\varepsilon }(f)(x)g(x)d\mu x-\int T_{\delta
}(f)(x)g(x)d\mu x\right|\\*[7pt] & = \left| \int \left( T_{\varepsilon }(f)(x)-T_{\delta }(f)(x\right) )%
\overset{m}{\underset{j=1}{\sum }}b_{j}\chi _{P_{j}}(x)d\mu
x\right|\\*[7pt]& = \left| \overset{m}{\underset{j=1}{\sum }}b_{j}\int_{P_{j\text{ }%
}}\int_{B(x,\varepsilon )\backslash B(x,\delta )}K(x-y)f(y)d\mu
yd\mu x\right|\\*[7pt]& \leq \overset{m}{\underset{j=1}{\sum }}\overset{l}{\underset{i=1}{\sum }}%
\left| b_{j}a_{i}\right| \left| \underset{\delta <\left| x-y\right|
<\varepsilon }{\int_{P_{j}}\int_{Q_{i}}}K(x-y)d\mu yd\mu x\right|.
\end{split}
\end{equation*}
By the antisymmetry of $K$ and Fubini's Theorem we have

\begin{equation*}
\begin{split}
&\left| \underset{\delta <\left| x-y\right| <\varepsilon
}{\int_{P_{j }}\int_{Q_{i}}}K(x-y)d\mu yd\mu x\right|\\*[7pt] & =
\left| \underset{\delta <\left| x-y\right| <\varepsilon
}{\int_{P_{j}}\int_{Q_{i}\cap P_{j}}} K(x-y)d\mu yd\mu
x+\underset{\delta <\left| x-y\right| <\varepsilon }{
\int_{P_{j}}\int_{Q_{i}\setminus P_{j}}}K(x-y)d\mu yd\mu
x\right|\\*[7pt]& \leq \left| \underset{\delta <\left| x-y\right|
<\varepsilon }{\int_{P_{j}\cap Q_{i}}\int_{Q_{i}\cap
P_{j}}}K(x-y)d\mu yd\mu x\right| +\left| \underset{\delta <\left|
x-y\right| <\varepsilon }{\int_{P_{j}\setminus Q_{i}}\int_{Q_{i}\cap
P_{j}}}K(x-y)d\mu yd\mu x\right|\\*[7pt] & + \left| \underset{\delta
<\left| x-y\right| <\varepsilon }{\int_{P_{j\text{ }}\setminus
Q_{i}}\int_{Q_{i}\setminus P_{j}}}K(x-y)d\mu yd\mu \right| +\left|
\underset{\delta <\left| x-y\right| <\varepsilon }{\int_{P_{j}\cap
Q_{i}}\int_{Q_{i}\setminus P_{j}}}K(x-y)d\mu yd\mu \right|\\*[7pt]
&\leq \underset{\delta <\left| x-y\right| <\varepsilon }{
\int_{Q_{i}}\int_{Q_{i}^{c}}}\left| K(x-y)\right| d\mu yd\mu
x+2\underset{ \delta <\left| x-y\right| <\varepsilon
}{\int_{P_{j}}\int_{P_{j}^{c}}}\left| K(x-y)\right| d\mu yd\mu x.
\end{split}
\end{equation*}
Therefore it is enough to show that for every $A\in
Q(\mathbb{R}^{n})$
\begin{equation}
\label{aac} \int_{A}\int_{A^{c}}\left| K(x-y)\right| d\mu yd\mu
x<\infty.
\end{equation}
Since $\mu(V)=0$ for every $V\in TA(n,n-1)$ instead of (\ref{aac})
it suffices to prove that
\begin{equation}
\label{aoac} \int_{A^\circ}\int_{A^{c}}\left| K(x-y)\right| d\mu
yd\mu x<\infty,
\end{equation}
for all $A\in Q(\mathbb{R}^{n})$. Let $G_{i}\in
TA(n,n-1),i=1,..,2n,$ be the hyperplanes that contain the $2n$ sides
of $A$. For any $x\in A^\circ\cap E$ and any $i=1,..,2n$ define the
following distance functions
\begin{equation*}
d_{i}(x)=d(x,G_{i}).
\end{equation*}
Let $N_{i}(x)>0,i=1,..,2n,$ be such that
\begin{equation*}
2^{N_{i}(x)}d_{i}(x)=1.
\end{equation*}
Hence if  $\left\lfloor N_{i}(x)\right\rfloor $ is the smallest
integer greater than $N_{i}(x)$
\begin{equation*}
\left\lfloor N_{i}(x)\right\rfloor \leq (\log 2)^{-1}\log
d_{i}(x)^{-1}+1.
\end{equation*}
Therefore
\begin{equation*}
E \setminus A\subset \overset{2n}{\underset{i=1}{\bigcup
}}\underset{j=1 }{\overset{\left\lfloor N_{i}(x)\right\rfloor
}{\bigcup }} B(x,2^{j}d_{i}(x))\setminus B(x,2^{j-1}d_{i}(x)),
\end{equation*}
and for all $x\in A^\circ\cap E$,
\begin{eqnarray*}
\int_{A^{c}}\left| K(x-y)\right| d\mu y &\leq &C_{K}\int_{\overset{2n}{%
\underset{i=1}{\bigcup }}\underset{j=1}{\overset{\left\lfloor
N_{i}(x)\right\rfloor }{\bigcup }}B(x,2^{j}d_{i}(x))\setminus
B(x,2^{j-1}d_{i}(x))}\left| x-y\right| ^{-(n-1)}d\mu y \\
&=&C_{K}\overset{2n}{\underset{i=1}{\sum }}\underset{j=1}{\overset{%
\left\lfloor N_{i}(x)\right\rfloor }{\sum }}\int_{B(x,2^{j}d_{i}(x))%
\setminus B(x,2^{j-1}d_{i}(x))}\left| x-y\right| ^{-(n-1)}d\mu y \\
&\leq &C_{K}\overset{2n}{\underset{i=1}{\sum }}\underset{j=1}{\overset{%
\left\lfloor N_{i}(x)\right\rfloor }{\sum }}\frac{\mu \left(
B(x,2^{j}d_{i}(x))\right) }{2^{-(n-1)}d_{i}(x)^{n-1}2^{j(n-1)}} \\
&\leq &C_{K}\overset{2n}{\underset{i=1}{\sum }}\underset{j=1}{\overset{%
\left\lfloor N_{i}(x)\right\rfloor }{\sum }}\frac{Cd_{i}(x)^{n-1}2^{j(n-1)}}{%
2^{-(n-1)}d_{i}(x)^{n-1}2^{j(n-1)}} \\
&\leq &C_{K}C2^{(n-1)}(\log 2)^{-1}(\overset{2n}{\underset{i=1}{\sum
}}\log d_{i}(x)^{-1}+2n).
\end{eqnarray*}
This leads to the following estimate
\begin{equation}
\label{fe} \int_{A^\circ}\int_{A^{c}}\left| K(x-y)\right| d\mu yd\mu
x \leq \frac{C_{K}C2^{(n-1)}}{\log
2}(\overset{2n}{\underset{i=1}{\sum }} \int_{A^\circ}\log
d_{i}(x)^{-1}d\mu x +2n).
\end{equation}
Notice that for $i=1,..,2n$, $A^\circ$ can be decomposed as
\begin{equation*}
A\subset \overset{\infty }{\underset{k=0}{\bigcup }}S_{k}(a_i,G_i)
\cup A'_i,
\end{equation*}
where $a_i=a_{G_i}$ and $A'_i=\{x \in A:d_i(x)>s_i=\sum_{j=0}^\infty
a_i^j\}$. Therefore
\begin{equation*}
\int_{A^\circ}\log d_{i}(x)^{-1}d\mu x\leq \overset{\infty
}{\underset{k=0}{\sum }} \int_{S_{k}(a_i,G_i)}\log d_{i}(x)^{-1}d\mu
x+\log s_i^{-1}.
\end{equation*}
For $x\in S_{k}(a_{G_i},G_i)$
\begin{equation*}
d_{i}(x)>\overset{\infty }{\underset{j=k+1}{\sum }}a_i^{j}=a_i^{k+1}
\dfrac{1}{1-a_i}
\end{equation*}
and
\begin{eqnarray*}
\log \frac{1}{d_{i}(x)} &\leq &\log \left(
\dfrac{1-a_i}{a_i^{k+1}}\right)
\\
&=&k\log \dfrac{1}{a_i}+\log \dfrac{1-a_i}{a_i}.
\end{eqnarray*}
Hence
\begin{equation}
\label{se} \int_{A^\circ}\log \frac{1}{d_{i}(x)}d\mu x\leq \log
\dfrac{1}{a_i}\overset{\infty }{ \underset{k=0}{\sum }}\mu
(S_{k}(a_i,G_i))k+\log \dfrac{1-a_i}{a_is_i}.
\end{equation}
Using (\ref{fe}) and (\ref{se}) we can estimate
\begin{equation*}
\begin{split}
&\int_{A^\circ}\int_{A^{c}}\left| K(x-y)\right| d\mu yd\mu x \leq \\
&\frac{C_{K}C2^{(n-1)}}{\log
2}\left(\overset{2n}{\underset{i=1}{\sum }}\log \dfrac{1
}{a_i}\overset{\infty }{\underset{k=0}{\sum }}\mu (S_{k}(a_i,G_i))k
+\overset{2n}{\underset{i=1}{\sum }}\log
\dfrac{1-a_i}{a_is_i}+2n\right).
\end{split}
\end{equation*}
Since, by (\ref{finitesum}), for $i=1,..,2n$
$$\sum_{k=0}^{\infty}\mu (S_{k}(a_i,G_{i}))k <\infty,$$ we have shown (\ref{aoac}) and the proof of Theorem \ref{fst}(i) is complete.
\end{proof}
We can now proceed in the proof of Theorem \ref{msithm}.
\begin{proof}[Proof of Theorem \ref{msithm}] Let $\textmd{spt}\mu=E$ and without loss of generality assume that $E\subset B(0,1/2)$. We start by proving (i). For $x\in \mathbb{R}^{n} ,r>0,i\in \{1,..,n\},q\in \mathbb{N}$ define
the following grids,
\begin{multline*}
Gr(x,r,i,q) =\{g\in A(x,r):g^{i}=x^{i}\text{ and for }1\leq j\leq n\
,j\neq i, \\
g^{j} =(x^{j}-\frac{r}{2})+\frac{r}{2q}(2k-1)\text{ for some }k=
1,..,q\}.
\end{multline*}
Since $E$ is $V^{i}-$directed porous for $i=1,..,n$, as an immediate
corollary of Definition \ref{mdf} there exists some $N\in \N,N\geq
2, $ such that for every $x\in \mathbb{R}^{n}$ and every $r>0$ there
exists some $y\in V_{x}^{i}\cap A(x,r)$ satisfying
\begin{equation}
\label{cubepor} A(y,rN^{-1})\subset A(x,r)\text{ }\setminus E.
\end{equation}
From (\ref{cubepor}) we also deduce that there exist some $M\in
\mathbb{N},M\geq 4,$ in fact we can even choose $M=2N$, such that
for every $x\in \mathbb{R}^{n}$, every $r>0$ and every $i=1,..,n$
there exists some $g_{(x,r,i)}\in Gr(x,r,i,M)$ such that
\begin{equation}
\label{grpor} A(g_{(x,r,i)},rM^{-1})\subset A(x,r)\backslash E.
\end{equation}
By Theorem \ref{fst} it is enough to show that for every $x\in
\mathbb{R}^{n}$ and every $i=1,..,n$
\begin{equation*}
\overset{\infty}{\underset{k=0}{\sum}}\mu(S_{k}(M^{-1},V_{x}^{i}))k<\infty.
\end{equation*}
Thus we need to estimate the measure $\mu$ of the strips
$V_{x}^{i}(2^{-1}M^{-k})$. The idea is to cover
$V_{x}^{i}(2^{-1}M^{-k})\cap E\cap A(x,1)$ with cubes from
$Q(\mathbb{R}^{n})$ of sidelength $M^{-k}$ with their centers in
$Gr(x,1,i,M^{k})$. The use of the specific grids allows us to count
the covering cubes easily. Note that in order to cover
$V_{x}^{i}(2^{-1}M^{-k})\cap A(x,1)$ with cubes in
$Q(\mathbb{R}^{n})$, of sidelength $M^{-k}$ and with centers in
$V_{x}^{i}$ we first cover $V_{x}^{i}\cap A(x,1)$ with cubes
$\{Q_{j}\}_{j \in J}$ in $Q(\mathbb{R}^{n-1})$. Then the required
cubes needed to cover $V_{x}^{i}(2^{-1}M^{-k})\cap A(x,1)$ will be
\begin{multline*}P_j=\{(y^1,..,y^i,..y^n)\in
\R^n:(y^1,..,y^{i-1},y^{i+1},..,y^n)\in Q_j \text{ and }\\
y_i\in[x^i-2^{-1}M^{-k},x^i+2^{-1}M^{-k})\}.\end{multline*} See
Figures \ref{fig1} and \ref{fig2} for an illustration.

For $x\in \mathbb{R}^{n},r>0$ and $i=1,..,n,$ denote
\begin{equation*}
Gr^{\ast }(x,r,i,M)=Gr(x,r,i,M)\setminus \{g_{(x,r,i)}\}.
\end{equation*}
Fix some $x\in \mathbb{R}^{n},r>0$ and $i=1,..,n,$ then by
(\ref{grpor})
\begin{equation*}
V_{i}^{x}(r(2M)^{-1})\cap E\cap A(x,r)\subset \underset{y\in
Gr^{\ast }(x,r,i,M)}{\bigcup }A(y,rM^{-1})
\end{equation*}
and
\begin{equation*}
\textmd{card}(Gr^{\ast }(x,r,i,M))=M^{n-1}-1.
\end{equation*}
Notice that the cardinality of the grid $Gr^{\ast }(x,r,i,M)$
depends only on its thickness, i.e. only on $M$.
\begin{figure}
\begin{minipage}[b]
{0.4\linewidth} % A minipage that covers half the page
\begin{center}
\includegraphics[width=7cm]{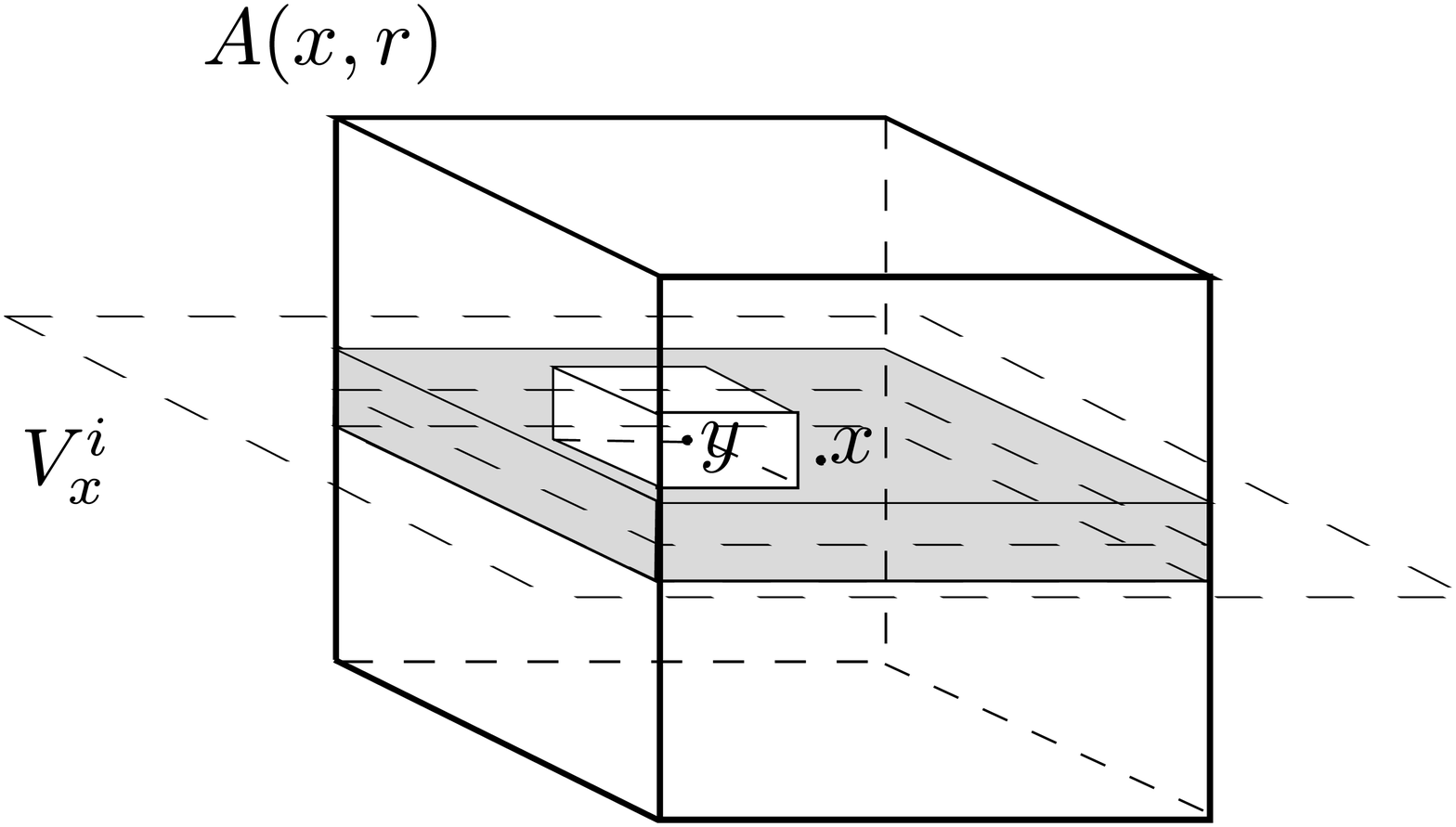}
\caption{}
\label{fig1}
\end{center}
\end{minipage}
\hspace{0.5cm}
\begin{minipage}[b]{0.4\linewidth}
\begin{center}
\includegraphics[width=7cm]{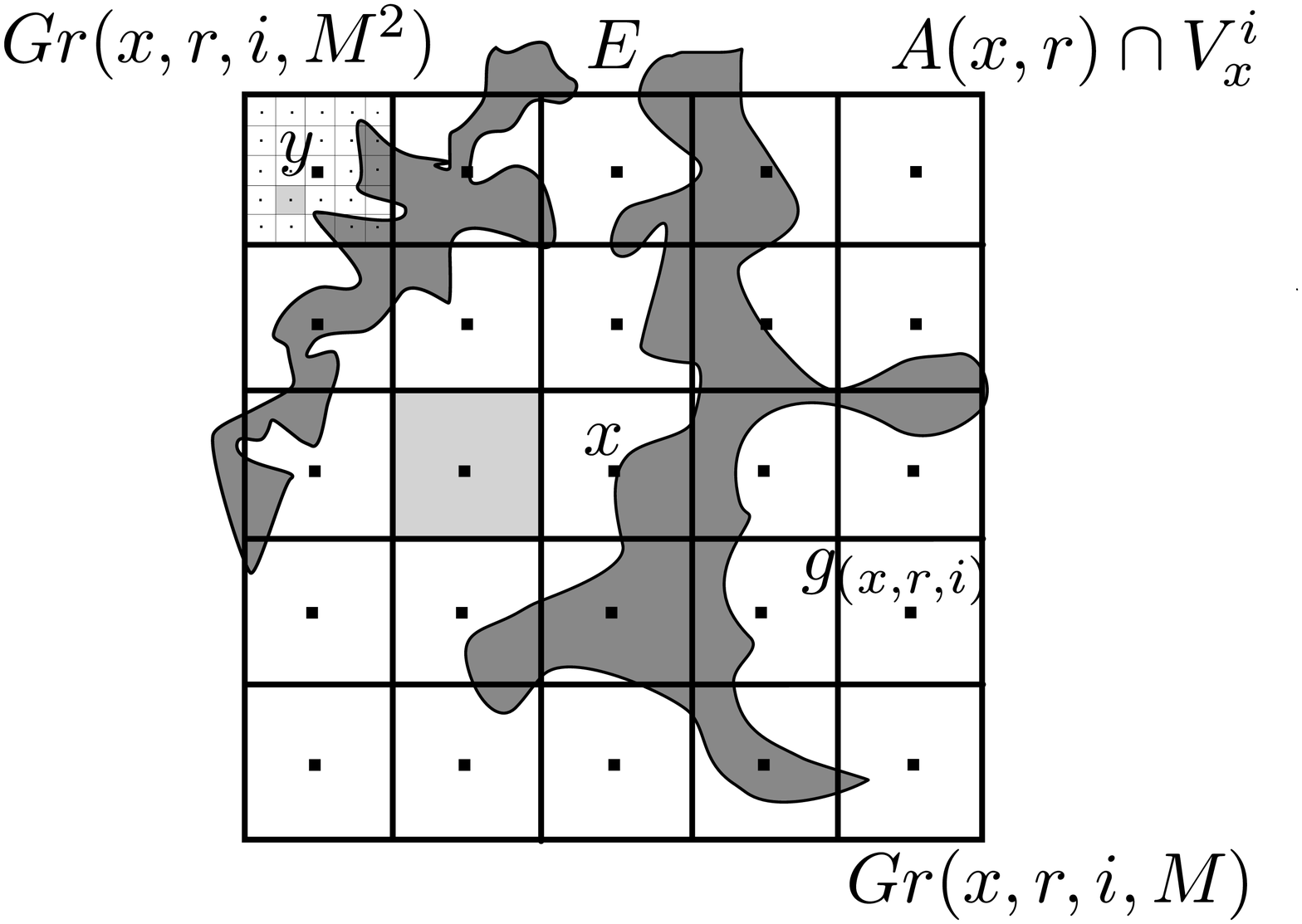}
\caption{} \label{fig2}
\end{center}
\end{minipage}
\end{figure}

In the same manner for $y\in Gr^{\ast }(x,r,i,M)$ the cubes
$A(y,rM^{-1})$ satisfy
\begin{equation*}
V_{i}^{x}(r2^{-1}M^{-2})\cap E\cap A(y,rM^{-1})\subset
\underset{h\in Gr^{\ast }(y,rM^{-1},i,M)}{\bigcup }A(h,rM^{-2}).
\end{equation*}
Therefore
\begin{equation*}
V_{i}^{x}(r2^{-1}M^{-2})\cap E\cap A(x,r)\subset \underset{\{h\in
Gr^{\ast }(y,rM^{-1},i,M):y\in Gr^{\ast }(x,r,i,M)\}}{\bigcup }
A(h,rM^{-2})
\end{equation*}
and
\begin{equation*}
\textmd{card}(\{h\in Gr^{\ast }(y,rM^{-1},i,M):y\in Gr^{\ast
}(x,r,i,M)\})=(M^{n-1}-1)^{2}.
\end{equation*}
Notice that
\begin{equation*}
\{h\in Gr^{\ast }(y,rM^{-1},i,M):y\in Gr^{\ast }(x,r,i,M)\}\subset
Gr(x,r,i,M^{2}).
\end{equation*}
Inductively we conclude that for all $x\in \mathbb{R}^{n},r>0,i\in
\{1,..,n\} $ and $k\in \mathbb{N}$ there exist sets of cubes
\begin{equation*}
Q^{k}(x,r,i)\subset Q(\mathbb{R}^{n}),
\end{equation*}
consisting of cubes $A(g,\frac{r}{M^{k}})$ with $g\in $
$Gr(x,r,i,M^{k})$ satisfying
\begin{enumerate}
\item $V_{i}^{x}(r2^{-1}M^{-k})\cap E\cap A(x,r)\subset \bigcup
\{Q:Q\in Q^{k}(x,r,i)\},$
\item $\textmd{card}(Q^{k}(x,r,i))=(M^{n-1}-1)^{k}$.
\end{enumerate}
Properties (i) and (ii) imply that for all $x\in
\mathbb{R}^{n},r>0,i=1,..,n$ and $k\in \mathbb{N}$
\begin{eqnarray*}
\mu(V_{x}^{i}(2^{-1}M^{-k})\cap A(x,1)) &\leq &\sum_{Q\in
Q^{k}(x,1,i)}\mu(Q)
\\*[3pt]
&\leq& \textmd{card}(Q^{k}(x,1,i))C(\sqrt{n}M^{-k})^{n-1} \\*[8pt]
&=&C(\sqrt{n})^{n-1}(1-M^{1-n})^{k}.
\end{eqnarray*}
For every $x\in \mathbb{R}^{n}$ and every $i=1,..,n$ there exist
$y_{(x,i)}^{1}$ and $y_{(x,i)}^{2}$ such that
\begin{equation*}
S_{k}(M^{-1},V_{x}^{i})=V_{y_{(x,i)}^{1}}^{i}(2^{-1}M^{-k})\cup
V_{y_{(x,i)}^{2}}^{i}(2^{-1}M^{-k})
\end{equation*}
and
\begin{equation*}
S_{k}(M^{-1},V_{x}^{i})\cap E\subset A(y_{(x,i)}^{1},1) \cup
A(y_{(x,i)}^{2},1).
\end{equation*}
Therefore we deduce that
\begin{equation*}
\begin{split}
\sum_{k=0}^{\infty }\mu(S_{k}(M^{-1},V_{x}^{i}))k
&=\underset{k=0}{\overset{\infty }{\sum }}
\mu(V_{y_{(x,i)}^{1}}^{i}(2^{-1}M^{-k})\cap A(y_{(x,i)}^{1},1))k \\
&+\underset{k=0}{\overset{\infty }{\sum }}\mu(V_{y_{(x,i)}^{2}}^{i}(2^{-1}M^{-k}) \cap A(y_{(x,i)}^{2},1))k \\
&\leq 2C(\sqrt{n})^{n-1}\sum_{k=0}^\infty (1-M^{1-n})^{k}k.
\end{split}
\end{equation*}
This concludes the proof of (i) since $$\sum_{k=0}^\infty
(1-M^{1-n})^{k}k<\infty.$$ For the proof of (ii) notice that since
$E$ is $V$-directed porous for all $V\in G(n,n-1)$ we can define the
function, $\Theta:G(n,n-1)\rightarrow (0,1)$, as
$$\Theta(V)= c(V)$$ where $c(V)$
are the numbers appearing in Definition \ref{mdf}. By compactness of
$G(n,n-1)$, see e.g.\cite{M2}, and continuity of $\Theta$, we deduce
that that $\Theta$ attains some minimal value $c$ depending only on
the set $E$. Using this observation, Theorem \ref{fst} (ii) and
exactly the same arguments as in (i), adapted to spheres, we obtain
(ii).
\end{proof}
\emph{Acknowledgement.} I express my gratitude to my advisor,
Professor Pertti Mattila, for many ideas, discussions, suggestions
and for repeated proof reading. I would also like to thank Antti
K\"{a}enm\"{a}ki, for many valuable comments and for pointing out
that the open set condition is the only extra assumption needed for
the CIFS discussed in Section 2.

\vspace{1cm}
\begin{footnotesize}
{\sc Department of Mathematics and Statistics,
P.O. Box 68,  FI-00014 University of Helsinki, Finland,}\\
\emph{E-mail address:} \verb"vasileios.chousionis@helsinki.fi"
\end{footnotesize}


\begin{thebibliography}{CMM}

\bibitem[C]{Cs} V. Chousionis, \emph{Singular integrals on Sierpinski gaskets}, to appear in
Publ. Mat.

\bibitem[CM]{CM} V. Chousionis, P. Mattila, \emph{Boundedness and convergence for singular integrals of measures separated by
Lipschitz graphs}, preprint.

\bibitem[D]{D3} G. David, \emph{Des int\'{e}grales singuli\`{e}res
born\'{e}es sur un ensemble de Cantor},  C. R. Acad. Sci. Paris Sr.
I Math. 332  (2001),  no. 5, 391--396.

\bibitem[JJKS]{JJKV} E. J\"{a}rvenp\"{a}\"{a}, M. J\"{a}rvenp\"{a}\"{a},  A. K\"{a}enm\"{a}ki, V. Suomala,
\emph{Asympotically sharp dimension estimates for $k$-porous sets},
Math. Scand.  97  (2005),  no. 2, 309--318.

\bibitem[K]{K} A. K\"{a}enm\"{a}ki, \emph{On the geometric structure of the limit set of conformal
iterated function systems}, Publ. Mat. 47 (2003), 133-141.


\bibitem[KS1]{KS1} A. K\"{a}enm\"{a}ki, V. Suomala, \emph{Nonsymmetric conical upper density and
k-porosity}, Preprint 299, University of Jyv\"{a}skyl\"{a}, 2004.

\bibitem[KS2]{KS2} A. K\"{a}enm\"{a}ki, V. Suomala, \emph{Conical upper density theorems and porosity of measures},
Adv. Math. 217 (2008), 952-966.

\bibitem[KV]{KV} A. K\"{a}enm\"{a}ki, M. Vilppolainen, \emph{Separation conditions on controlled Moran
constructions} to appear in Fund. Math.

\bibitem[KR]{KR} P.\ Koskela, S.\ Rohde,
\emph{Hausdorff dimension and mean porosity}, Math. Ann. 309 (1997),
593--609.

\bibitem[MVu]{MVu} O. Martio, M. Vuorinen,
\emph{Whitney cubes, p-capacity, and Minkowski content}, Expo. Math.
5 (1987), 17--40.

\bibitem[M1]{M1} P. Mattila, \emph{Distribution of sets and measures along planes},  J. London
Math. Soc. \textbf{38} (1988), 125--132.

\bibitem[M2]{M2} P. Mattila,
\emph{Geometry of sets and measures in Euclidean spaces, fractals
and rectifiability}, Cambridge University Press, 1995.

\bibitem[MM]{MM} P. Mattila, M. S. Melnikov,
\emph{ Existence and weak-type inequalities for Cauchy integrals of
general measures on rectifiable curves and sets }, Proc.
Amer.Math.Soc.  120  (1994),  no. 1, 143--149.

\bibitem[MMV]{MMV} P. Mattila, M. S. Melnikov, J. Verdera,
 \emph{The Cauchy integral, analytic capacity, and uniform rectifiability}, Ann. of Math. (2)  144  (1996),  no. 1, 127--136.

\bibitem[MP]{MPr} P. Mattila, D. Preiss,
\emph{Rectifiable measures in $\Rn$ and existence of principal
values for singular integrals}, J. London Math. Soc., 52 (1995),
482-496.

\bibitem[MV]{MV} P. Mattila, J. Verdera, \emph{Convergence of
singular integrals with general measures}, to appear in J. Eur.
Math. Soc.

\bibitem[MMU]{MMU} R. D. Mauldin, V. Mayer, M. Urba\'{n}ski, \emph{Rigidity of connected
limit sets of conformal IFS} Michigan Math. J. 49 (2001), 451-458.

\bibitem[MU]{MU} R. D. Mauldin, M. Urba\'{n}ski, \emph{Dimensions and measures in
infinite iterated function systems}, Proc. London Math. Soc. (3)  73
(1996),  no. 1, 105--154.

\bibitem[MayU]{VMU} V. Mayer, M. Urba\'{n}ski, \emph{Finer Geometric Rigidity of
limit sets of conformal IFS}, Proc. Amer. Math. Soc. 131 (2003),
3605-3702.

\bibitem[R]{R} G. Reshetnyak, \emph{Stability theorems in geometry and analysis},
 Kluwer Academic Publishers 1994.

\bibitem[S]{S1} A. Salli,\emph{On the Hausdorff dimension of strongly porous fractal sets in
$\R^n$}, Proc. London Math. Soc. (3) 62 (1991), 353--372.

\bibitem[T1]{T1} X. Tolsa,
\emph{Cotlar's inequality without the doubling condition and
existence of principal values for the Cauchy integral of measures},
J. Reine Angew. Math.  502  (1998), 199--235.

\bibitem[T2]{T2} X. Tolsa,
\emph{Principal values for Riesz transforms and rectifiability},
 J. Funct. Anal., vol. 254(7) 2008, 1811-1863.

\bibitem[U]{U} M. Urba\'{n}ski, \emph{Porosity in conformal infinite iterated function systems},  J.
Number Th. 88 No. 2 (2001), 283-312.

\bibitem[Ve]{Ve} J. Verdera,
\emph{A weak type inequality for Cauchy transforms of finite
measures},  Publ. Mat. 36 (1992), no. 2B, 10291034.

\bibitem[V\"{a}]{V1} J. V\"{a}is\"{a}l\"{a}, \emph{Porous sets and quasisymmetric maps}, Trans. Amer. Math. Soc.  299
(1987),  no. 2, 525--533.

\end{thebibliography}
\end{document}